\newcommand*{\dt}[1]{%
   \accentset{\mbox{\large\bfseries .}}{#1}}
\DeclareFontFamily{OT1}{rsfs}{}
\DeclareFontShape{OT1}{rsfs}{n}{it}{<-> rsfs10}{}
\DeclareMathAlphabet{\mathscr}{OT1}{rsfs}{n}{it}
\renewcommand*{\setminus}{-}
\DeclareMathOperator{\Bun}{Bun}
\DeclareMathOperator{\Gr}{Gr}
\DeclareMathOperator{\bGr}{\overline{\mathrm{Gr}}}
\DeclareMathOperator{\mW}{\mathcal W}
\DeclareMathOperator{\bW}{\overline{\mathcal W}}
\DeclareMathOperator{\Spec}{Spec}
\theoremstyle{plain}
  \newtheorem{theorem}[subsection]{Theorem}
  \newtheorem{proposition}[subsection]{Proposition}
  \newtheorem{lemma}[subsection]{Lemma}
  \newtheorem{corollary}[subsection]{Corollary}
  \newtheorem{conjecture}[subsection]{Conjecture} 
  \newtheorem{lem-def}[subsection]{Lemma-Definition}
\theoremstyle{definition}
\theoremstyle{remark}
  \newtheorem{example}[subsection]{Example}
  \newtheorem{remark}[subsection]{Remark}
\numberwithin{equation}{section}
   \DeclareMathOperator{\SL}{SL}
\begin{document}

\author{Yehao Zhou}
\address{Perimeter Institute for Theoretical Physics}
\email{yzhou3@pitp.ca}

\title{Note on some properties of generalized affine Grassmannian slices}

\begin{abstract}
    Braverman, Finkelberg and Nakajima introduced the generalized affine Grassmannian slices $\overline{\mathcal W}^{\lambda}_{\mu}$ and showed that they are Coulomb branches of $3d$ $\mathcal N=4$ gauge theories. We prove a conjecture of theirs that a transversal slice of $\mathcal W^{\nu}_{\mu}$ in $\overline{\mathcal W}^{\lambda}_{\mu}$ is isomorphic to $\overline{\mathcal W}^{\lambda}_{\nu}$. In addition, they conjecture that Coulomb branches of $3d$ $\mathcal N=4$ gauge theories have symplectic singularities, and we confirm this conjecture for generalized affine Grassmannian slices $\overline{\mathcal W}^{\lambda}_{\mu}$. Along the way we give a new proof of the smoothness of $\mathcal W^{\nu}_{\mu}$, which has been previously proven by Muthiah and Weekes using different method.
\end{abstract}

\date{}

\maketitle

\section{Introduction}
For a complex reductive group $G$, and a pair of dominant coweights $\lambda,\mu$ such that $\mu\le\lambda$, the affine Grassmannian slice $\bW^{\lambda}_{\mu}$ is a transversal slice of $G_{\mathcal O}$-orbit $\Gr^{\mu}$ in the $G_{\mathcal O}$-orbit closure $\bGr^{\lambda}$. As they are transversal slices, they contain important information on singularities of $\bGr^{\lambda}$. It is shown that $\bW^{\lambda}_{\mu}$ has symplectic singularities \cite{kamnitzer2014yangians}, and this implies that $\bGr^{\lambda}$ has rational and Gorenstein singularities. The also play significant role in representation theory, for example the intersection cohomology of $\bW^{\lambda}_{\mu}$ can be identified with the the weight $\mu$ subspace $V^{\lambda}(\mu)$ of the irreducible representation $V^{\lambda}$ of the Langlands dual group $G^{\vee}$ of highest weight $\lambda$, by the geometric Satake correspondence. 

Recently, Braverman, Finkelberg and Nakajima defined a generalization of affine Grassmannian slices in their seminal paper \cite{braverman2016coulomb}, which is still denoted by $\bW^{\lambda}_{\mu}$ but $\mu$ is not required to be dominant (we will recall its definition in Section \ref{Sec: Smoothness}). They have shown that the generalized affine Grassmannian slices are Coulomb branches of $3d$ $\mathcal N=4$ quiver gauge theories, when $G$ is of ADE-type, and later work \cite{nakajima2019coulomb} extended this result to BCFG-type using quivers with symmetrizers.

Physically, a Coulomb branch is an irreducible component of the moduli space of vacua of a supersymmetric gauge theory \cite{bullimore2015coulomb}. It is argued in the physics literature that Coulomb branch of $3d$ $\mathcal N=4$ gauge theory is the moduli space of singular $G_c$-monopoles on $\mathbb R^3$ \cite{hanany1997type,tong1999three,de1997mirror,de1997mirror2,cherkis1998singular}, denoted by $\mathcal M_C(\mu,\lambda)$, where $G_c$ is the compact form of $G$, $\mu,\lambda:S^1\to G_c$ are coweights attached at $\infty ,0$ of $\mathbb R^3$. Here $\mu$ is called the magnetic charge, and $\lambda$ is called the monopole charge which is assumed to be dominant.

The variety $\mathcal M_C(\mu,\lambda)$ is a remarkable object. It carries a natural Poisson structure, together with a canonical quantization $\mathcal M^q_C(\mu,\lambda)$ with respect to the natural Poisson structure \cite{braverman2016towards}. It is expected to be symplectic dual to the Higgs branch $\mathcal M_H(\mu,\lambda)$ of the same gauge theory \cite{braden2012quantizations,braden2014quantizations}, and the symplectic duality has been worked out in many cases when the gauge group is simply-laced \cite{webster2016koszul,kamnitzer2018category}. These  features make it an ideal test ground for geometric representation theory.

In this note we will focus on some basic geometric properties of $\mathcal M_C(\mu,\lambda)$. According to the ``monopole bubbling" \cite{ito2012line,gomis2012exact}, the moduli space of singular $G_c$-monopoles on $\mathbb R^3$ has a decomposition
$$\mathcal M_C(\mu,\lambda)=\coprod_{\substack{\nu:\text{ dominant}\\ \mu\le \nu\le \lambda }}\mathring{\mathcal M}_C(\mu,\nu)$$
where $\mathring{\mathcal M}_C(\mu,\nu)$ denotes the moduli space of genuine $G_c$-monopoles on $\mathbb R^3$ with magnetic charge $\mu$ and monopole charge $\nu$. In \cite{nakajima2015questions}, Nakajima made the following conjecture:
\begin{conjecture}\label{Conj: Main}
$\mathring{\mathcal M}_C(\mu,\nu)$ is a symplectic leaf of $\mathcal M_C(\mu,\lambda)$, and its transversal slice in $\mathcal M_C(\mu,\lambda)$ is isomorphic to $\mathcal M_C(\nu,\mu)$.
\end{conjecture}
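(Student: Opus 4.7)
The strategy is to translate Conjecture \ref{Conj: Main} to the algebro-geometric side and prove the equivalent statement: for any dominant $\nu$ with $\mu\le\nu\le\lambda$, the locally closed subvariety $\mW^{\nu}_{\mu}\subset\bW^{\lambda}_{\mu}$ is a symplectic leaf of $\bW^{\lambda}_{\mu}$, and its transversal slice in $\bW^{\lambda}_{\mu}$ is isomorphic to $\bW^{\lambda}_{\nu}$. The main tool will be a Beilinson--Drinfeld-style factorization deformation of $\bW^{\lambda}_{\mu}$ over a configuration space of colored points on $\A^1$, combined with the smoothness of $\mW^{\nu}_{\mu}$ proven in Section \ref{Sec: Smoothness}.

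First I would construct a flat family $\bY^{\lambda}_{\mu}\to \A^{(\lambda)}$ over the space of effective colored divisors of total degree $\lambda$ on $\A^1$, with central fiber $\bW^{\lambda}_{\mu}$ and satisfying factorization: over a disjoint configuration $D=D_1+D_2$ with $\deg D_i=\lambda_i$ and $\lambda_1+\lambda_2=\lambda$, the fiber factors as a product of smaller generalized slices. This is modeled on the classical Zastava / BD Grassmannian construction, adapted to the BFN description of $\bW^{\lambda}_{\mu}$ as a moduli of $G$-bundles on $\P^1$ with a trivialization near $\infty$ of pole type $\mu$ and with prescribed singularity of degree $\lambda$ at $0$.

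Next, given a point $y\in\mW^{\nu}_{\mu}$, I would use factorization to produce an \'etale-local isomorphism $\bW^{\lambda}_{\mu}\cong \mW^{\nu}_{\mu}\times \bW^{\lambda}_{\nu}$ near $y$, by ``pulling apart'' the $\nu$-colored and $(\lambda-\nu)$-colored parts of the defect at $0$: the $\nu$-part moves within the smooth stratum $\mW^{\nu}_{\mu}$, while the residual $(\lambda-\nu)$-part sweeps out a copy of $\bW^{\lambda}_{\nu}$ transverse to it. Combined with the smoothness of $\mW^{\nu}_{\mu}$, this identifies the transversal direction as $\bW^{\lambda}_{\nu}$; one then checks that the product decomposition is compatible with the Poisson structure coming from BFN, which implies that $\mW^{\nu}_{\mu}$ is a symplectic leaf.

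The main obstacle will be twofold. First, the factorization deformation has to be constructed for non-dominant $\mu$: for dominant $\mu$ it is well-known via the BD Grassmannian, but in the generalized setting $\bW^{\lambda}_{\mu}$ is defined as a closure inside a larger moduli space (involving insertions indexed by positive coroots), and one must check that this closure behaves well in families and that factorization commutes with taking closures. Second, verifying Poisson-compatibility of the product decomposition requires either a Hamiltonian-reduction-style argument, using the $T$-action and its moment map, or a direct computation in the BFN Coulomb branch, both of which are delicate; the Poisson structure on $\bW^{\lambda}_{\mu}$ does not a priori split as a product along the factorization, so one needs an extra argument (e.g. using that $\mW^{\nu}_{\mu}$ is a coisotropic subvariety of maximal dimension among symplectic leaves of its support).
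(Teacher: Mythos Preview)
Your approach is genuinely different from the paper's, and there is a real gap in it.

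\textbf{What the paper does.} The paper does \emph{not} use factorization to prove the transversal slice statement. Instead, Section~\ref{Sec: Transversal Slice} works entirely with explicit group-theoretic manipulations inside $G(\!(z^{-1})\!)$. The key input is the decomposition $z^{\nu}G_1[\![z^{-1}]\!]z^{-\nu}=\mathcal U^{\nu}\cdot \mathcal W_{\nu}$, which gives multiplication maps $m_l,m_r$ exhibiting $\mathcal W_{\nu}$ as a transversal slice inside $\Gr$. The passage to finite $\lambda$ and to non-dominant $\mu$ is done by lifting to auxiliary closed subschemes $\prescript{\lambda_1}{\lambda_2}{\mathcal X}_{\mu}\subset G(\!(z^{-1})\!)$, checking a stability statement under left multiplication by $z^{\nu}G_1[\![z^{-1}]\!]z^{-\nu}$, and then proving directly that the resulting map $\psi:(\mW_{\mu}\cap\widetilde{\mathcal U}^{\nu})\times \bW^{\lambda}_{\nu}\to \bW^{\lambda}_{\mu}$ is a monomorphism; Zariski's Main Theorem then upgrades this to an open immersion. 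Factorization appears only in Section~\ref{Sec: Symplectic Singularities}, and only to extend the Poisson structure across codimension-one boundary strata.

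\textbf{The gap in your plan.} BD-type factorization splits the defect divisor, and the factors you obtain when points separate are slices with \emph{smaller} upper labels: a decomposition $\lambda=\lambda_1+\lambda_2$ produces pieces of the form $\bW^{\lambda_1}_{\mu_1}\times\bW^{\lambda_2}_{\mu_2}$. But the transversal slice you are after is $\bW^{\lambda}_{\nu}$, with the \emph{same} upper label $\lambda$ and a different lower label. These are not the same variety: $\lambda-\nu$ is a sum of positive coroots and need not even be dominant, and $\bW^{\lambda}_{\nu}$ has dimension $\langle 2\rho,\lambda-\nu\rangle$, which is not what a $(\lambda-\nu)$-labelled BD factor would give. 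In other words, the stratification of $\bGr^{\lambda}$ by $G_{\mathcal O}$-orbits is orthogonal to the factorization structure, so ``pulling apart the $\nu$-colored and $(\lambda-\nu)$-colored parts of the defect'' does not produce a copy of $\bW^{\lambda}_{\nu}$. The paper's counterexample at the end of \S3.1 already illustrates the difficulty: the naive multiplication map $m_l$ does not carry $\bW^{\lambda}_{\nu}\times\mathbf p^{-1}(\mathcal U^{\nu})$ into $\bW^{\lambda}_{\mu}$, and a more delicate construction (the map $\psi$ built from $m_r$ and the projections of Lemma~\ref{Lemma: Isomorphism}) is needed.
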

 
By the results in \cite{braverman2016coulomb,nakajima2019coulomb}, generalized affine Grassmannian slices are examples of Coulomb branches of $3d$ $\mathcal N=4$ quiver gauge theories (with appropriate symmetrizers if needed), and in this case we have $\mathcal M_C(\mu,\lambda)=\bW^{\lambda}_{\mu}$ and $\mathring{\mathcal M}_C(\mu,\nu)=\mW^{\nu}_{\mu}$, in particular we have the following ``monopole bubbling" decomposition
$$\overline {\mathcal W}^{\lambda}_{\mu}=\coprod_{\substack{\nu:\text{ dominant}\\ \mu\le \nu\le \lambda }}\mathcal{W}^{\nu}_{\mu}.$$
The main result of this note is that we confirm the Conjecture \ref{Conj: Main} for $\overline {\mathcal W}^{\lambda}_{\mu}$. We will review the definitions of $\overline {\mathcal W}^{\lambda}_{\mu}$ and $\mathcal{W}^{\nu}_{\mu}$, and explain the nature of this decomposition  in Section \ref{Sec: Smoothness}. It is explained in \cite[Remark 3.19]{braverman2016coulomb} that to prove the first part of Conjecture \ref{Conj: Main} for $\overline {\mathcal W}^{\lambda}_{\mu}$, it suffices to prove the following
\begin{theorem}\label{Thm: Smoothness}
$\mathcal{W}^{\nu}_{\mu}$ is smooth.
\end{theorem}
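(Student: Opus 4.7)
The plan is to reduce the smoothness of $\mW^{\nu}_{\mu}$ to the classical (fully dominant) case by an open embedding of generalized slices into classical ones. The key tool is the \emph{stabilization} (multiplicative shift) open embedding
\[
\iota_{\theta}\colon\bW^{\lambda}_{\mu}\hookrightarrow\bW^{\lambda+\theta}_{\mu+\theta},
\]
available for any dominant coweight $\theta$. This embedding is implicit in the BFN definition \cite{braverman2016coulomb}, arising from multiplying the scattering data on the appropriate side by $t^{\theta}$. First I would recall this construction and verify that $\iota_{\theta}$ is an open immersion by exhibiting its image as the locus cut out by an explicit non-vanishing condition on matrix coefficients of the scattering datum.

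The second step is to check that $\iota_{\theta}$ is strictly compatible with the monopole-bubbling stratification, in the sense that $\iota_{\theta}^{-1}(\mW^{\nu+\theta}_{\mu+\theta})=\mW^{\nu}_{\mu}$ for every dominant $\nu$ with $\mu\le\nu\le\lambda$. I would verify this by unpacking the stratum description (each $\mW^{\nu}_{\mu}$ is characterized by the pole type of the scattering datum at the origin) and observing that multiplication by $t^{\theta}$ preserves the relevant pole pattern while shifting coweight indices uniformly by $\theta$.

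The third step is to choose $\theta$ dominant and large enough so that $\mu+\theta$ is dominant; since $\nu$ is already dominant, $\nu+\theta$ is then dominant too. The target $\bW^{\lambda+\theta}_{\mu+\theta}$ is therefore a classical affine Grassmannian slice, and the stratum $\mW^{\nu+\theta}_{\mu+\theta}=\Gr^{\nu+\theta}\cap\bW^{\lambda+\theta}_{\mu+\theta}$ is smooth: it is the transverse intersection of the smooth $G_{\mathcal O}$-orbit $\Gr^{\nu+\theta}$ with the classical transverse slice, a well-established classical fact (cf.\ \cite{kamnitzer2014yangians}). Combining these ingredients, $\mW^{\nu}_{\mu}$ embeds as an open subvariety of the smooth $\mW^{\nu+\theta}_{\mu+\theta}$, and is therefore itself smooth.

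I expect the main obstacle to be step two: verifying that the stabilization embedding strictly respects the stratification. This requires a careful analysis of the BFN moduli description and of the characterization of each stratum in terms of pole data, and in particular requires ruling out the possibility that a point of $\mW^{\nu}_{\mu}$ is pushed into a deeper stratum by $\iota_{\theta}$, or vice versa. Once this bookkeeping is settled, the rest of the argument is essentially formal.
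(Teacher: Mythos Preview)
Your strategy is genuinely different from the paper's, and it has a real gap at its first step.

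The paper does not reduce to the dominant case at all. It instead proves (Proposition~\ref{Prop: Smoothness}) the stronger Beilinson--Drinfeld statement that $\mW^{\underline\lambda}_\mu\to\mathbb A^N$ is smooth, by fitting $\mathsf W^{\underline\lambda}_\mu$ into a tower of Cartesian squares over the Hecke stack. The point is that $\mathsf W^{\underline\lambda}_\mu$ is \emph{open} inside an auxiliary space $\mathsf C^{\underline\lambda}_\mu$ (the openness coming from the trivial-bundle locus in $\Bun_G(\mathbb P^1,\infty)$), and $\mathsf C^{\underline\lambda}_\mu\to\Bun_B^{w_0\mu}(\mathbb P^1,\infty)$ is a smooth-locally trivial fibration with fibre the open BD Schubert cell $\Gr^{\underline\lambda^*}_{\mathrm{BD}}$, which is smooth over $\mathbb A^N$. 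The argument is uniform in $\mu$ and never invokes the classical dominant case.

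Your approach, by contrast, rests entirely on the existence of an open immersion $\iota_\theta\colon\bW^\lambda_\mu\hookrightarrow\bW^{\lambda+\theta}_{\mu+\theta}$, and this is where the problem lies. The construction you sketch---``multiplying the scattering data on the appropriate side by $t^\theta$''---does not work. In the group-theoretic model $\mathcal W_\mu=U_1[\![z^{-1}]\!]\,T_1[\![z^{-1}]\!]z^\mu\,U^-_1[\![z^{-1}]\!]$, right multiplication by $z^\theta$ sends this into $U_1[\![z^{-1}]\!]\,T_1[\![z^{-1}]\!]z^{\mu+\theta}\cdot z^{-\theta}U^-_1[\![z^{-1}]\!]z^{\theta}$, and for dominant $\theta$ the conjugated factor strictly contains $U^-_1[\![z^{-1}]\!]$ (it picks up polynomial tails), so the image is not inside $\mathcal W_{\mu+\theta}$. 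Left multiplication has the symmetric defect. Likewise, neither operation carries $\overline{G[z]z^\lambda G[z]}$ into $\overline{G[z]z^{\lambda+\theta}G[z]}$. In the moduli picture, modifying $\mathcal F_G$ at $0$ by $z^\theta$ shifts the degree of the $B$-structure by $\theta$, not by $w_0\theta$, so you do not land in the correct component of $\Bun_B$. The open embedding that \emph{is} available in \cite{braverman2016coulomb} is the special one $\bW^0_{\mu-\lambda}\cong\mathring Z^{\alpha^*}\hookrightarrow\bW^\lambda_\mu$, which arises as the locus where $\mathbf q$ is an isomorphism rather than from any multiplicative shift, and it does not iterate to produce your $\iota_\theta$ for a general source. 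Unless you can supply an honest construction of $\iota_\theta$ together with the stratification compatibility, the reduction to the dominant case does not go through.
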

This theorem has been proven in \cite{muthiah2019symplectic}. In Section \ref{Sec: Smoothness}, we will use a different method to reprove Theorem \ref{Thm: Smoothness}, which is inspired from the proof of the Cohen-Macaulay property of $\overline {\mathcal W}^{\lambda}_{\mu}$ in \cite[Lemma 2.16]{braverman2016coulomb}. In fact, we will prove a slightly stronger result, which holds for the Beilinson-Drinfeld version of the generalized affine Grassmannian slices, see Proposition \ref{Prop: Smooth Locus}. In Section \ref{Sec: Symplectic Singularities}, we prove the second part of Conjecture \ref{Conj: Main} for $\overline {\mathcal W}^{\lambda}_{\mu}$:
\begin{theorem}\label{Thm: Transversal Slice}
A transversal slice of $\mathcal W^{\nu}_{\mu}$ in $\overline{\mathcal W}^{\lambda}_{\mu}$ is isomorphic to $\overline{\mathcal W}^{\lambda}_{\nu}$.
\end{theorem}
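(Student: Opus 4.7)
The plan is to produce, étale-locally around any point of $\mW^{\nu}_{\mu}$, a decomposition of $\bW^{\lambda}_{\mu}$ as a product of $\mW^{\nu}_{\mu}$ with $\bW^{\lambda}_{\nu}$, using the factorization structure of the Beilinson--Drinfeld version $\bW^{\lambda}_{\mu,\mathrm{BD}}$ already introduced in Section~\ref{Sec: Smoothness}. This mirrors the classical picture in which transversal slices of Schubert cells in the affine Grassmannian are obtained by separating the support points of the underlying Hecke modification.

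First I would set up a BD family $\bW^{\lambda}_{\mu,\mathrm{BD}}$ over $\A^2$ splitting the ``charge'' $\lambda-\mu$ between two marked points $z_1,z_2$ in accordance with the choice $\mu\le\nu\le\lambda$. The factorization property---the analogue for generalized slices of the usual Beilinson--Drinfeld factorization---should yield an isomorphism over the open locus $\{z_1\neq z_2\}$ between this family and $\bW^{\nu}_{\mu}\times\bW^{\lambda}_{\nu}$ (with the two factors localized at $z_1$ and $z_2$ respectively), while the fiber over the diagonal $z_1=z_2$ recovers $\bW^{\lambda}_{\mu}$.

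Next I would restrict attention to a small étale neighborhood of a pair $(x,e_{\nu})\in\mW^{\nu}_{\mu}\times\bW^{\lambda}_{\nu}$, where $e_{\nu}$ is the distinguished base point of $\bW^{\lambda}_{\nu}$ corresponding to the closed stratum. Using flatness of the BD family and the smoothness of $\mW^{\nu}_{\mu}$ (Theorem~\ref{Thm: Smoothness}), the factorization over the open locus should extend across the diagonal to an étale morphism
$$\bW^{\lambda}_{\mu}\longrightarrow\mW^{\nu}_{\mu}\times\bW^{\lambda}_{\nu}$$
sending $x\mapsto(x,e_{\nu})$. The dimension equality $\dim\bW^{\lambda}_{\mu}=\dim\mW^{\nu}_{\mu}+\dim\bW^{\lambda}_{\nu}$, established in \cite{braverman2016coulomb}, then forces this morphism to be a local isomorphism at $(x,e_{\nu})$, identifying $\bW^{\lambda}_{\nu}$ as the transversal slice of $\mW^{\nu}_{\mu}$ at $x$ in $\bW^{\lambda}_{\mu}$.

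The hard part will be making the factorization argument rigorous in the generalized setting, where $\mu$ is not required to be dominant: one must define the BD family $\bW^{\lambda}_{\mu,\mathrm{BD}}$ with careful bookkeeping of the meromorphic trivializations over each marked point, and verify that the identification with $\bW^{\nu}_{\mu}\times\bW^{\lambda}_{\nu}$ on the generic fiber is compatible with the chosen base point $e_{\nu}$ and with the stratum $\mW^{\nu}_{\mu}$ on the first factor. Once this is established, the smoothness result from Section~\ref{Sec: Smoothness} together with the dimension count converts the factorization directly into the desired transversal slice identification.
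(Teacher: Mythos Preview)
Your approach is genuinely different from the paper's, and as it stands it has real gaps rather than just missing details.

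The paper does not use Beilinson--Drinfeld factorization at all for this theorem. Instead it works directly with loop-group multiplication. One identifies $\mathcal U^{\nu}:=G[z]\cap z^{\nu}G_1[\![z^{-1}]\!]z^{-\nu}$ (an open chart of $\Gr^{\nu}$) and $\mW_{\nu}=G_1[\![z^{-1}]\!]\cap z^{\nu}G_1[\![z^{-1}]\!]z^{-\nu}$, shows that the \emph{right} multiplication map $m_r:\mathcal U^{\nu}\times\bW^{\lambda}_{\nu}\hookrightarrow\bGr^{\lambda}$ is an open immersion, and then lifts this to the level of $G(\!(z^{-1})\!)$ via auxiliary spaces $\prescript{\lambda_1}{\lambda_2}{\mathcal X}_{\mu}$. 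A concrete map $\psi:(\mW_{\mu}\cap\widetilde{\mathcal U}^{\nu})\times\bW^{\lambda}_{\nu}\to\bW^{\lambda}_{\mu}$ is written down, checked by hand to be a monomorphism, and then Zariski's Main Theorem plus equality of dimensions upgrades it to an open immersion. The paper even explains why the na\"ive left-multiplication map fails (an explicit $\SL_2$ counterexample), which is morally the same obstruction you would hit trying to push a generic-fiber product structure to the special fiber.

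Your plan has two substantive problems. First, the BD family you need---one whose generic fiber is $\bW^{\nu}_{\mu}\times\bW^{\lambda}_{\nu}$ and whose diagonal fiber is $\bW^{\lambda}_{\mu}$---is not the family $\mW^{\underline\lambda}_{\mu}$ defined in Section~\ref{Sec: Smoothness}. That family splits the \emph{pole divisor} $\lambda$ among the marked points; the $B$-structure of degree $w_0\mu$ remains a single global datum on $\P^1$ and does not factor. Getting one factor to look like $\bW^{\lambda}_{\nu}$ (with a $B$-structure of degree $w_0\nu$) and the other like $\bW^{\nu}_{\mu}$ would require manufacturing a $B$-structure of a different degree on each piece, and there is no evident mechanism for this. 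The Zastava-type factorization that does exist (Proposition~\ref{Prop: Factorization}) splits off a surface singularity $\mathcal S_{\langle\lambda,\check\alpha_{i^*}\rangle}$, not a smaller slice $\bW^{\lambda}_{\nu}$.

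Second, even granting a factorization over $\{z_1\neq z_2\}$, the passage ``flatness $+$ smoothness of $\mW^{\nu}_{\mu}$ $\Rightarrow$ \'etale map on the diagonal fiber'' is not a valid argument. Flatness controls dimensions of fibers, not the existence of a product decomposition on the special fiber; to transport an isomorphism from the generic to the special fiber one typically needs a contracting $\mathbb G_m$-action or a nearby-cycles/specialization argument, neither of which you supply. Absent that, your sketch does not produce the map $\bW^{\lambda}_{\mu}\to\mW^{\nu}_{\mu}\times\bW^{\lambda}_{\nu}$ you claim.
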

\noindent Note that this theorem is known in \cite{braverman2016coulomb} for two special cases: dominant $\mu$ and $\mu\le w_0\lambda$.\\

In \cite{braverman2016towards}, Braverman \textit{et. al.} made the following conjecture on the singularities of the Coulomb branches of $3d$ $\mathcal N=4$ gauge theories.
\begin{conjecture}\label{Conj: Symplectic Singularities}
$\mathcal M_C(\mu,\lambda)$ has symplectic singularities.
\end{conjecture}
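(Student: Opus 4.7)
My plan is to deduce Conjecture \ref{Conj: Symplectic Singularities} for $\bW^{\lambda}_{\mu}$ from Theorem \ref{Thm: Transversal Slice} together with the classical (dominant-$\mu$) case of the conjecture, already known by \cite{kamnitzer2014yangians}. First I would collect the background data: $\bW^{\lambda}_{\mu}$ is normal (established in \cite{braverman2016coulomb}), it carries a Poisson structure coming from the Coulomb-branch description of \cite{braverman2016coulomb,nakajima2019coulomb}, and by Theorem \ref{Thm: Smoothness} the top stratum $\mW^{\lambda}_{\mu}$ is smooth, where the Poisson tensor is non-degenerate and so determines a symplectic form $\omega$ on an open dense subset of the smooth locus of $\bW^{\lambda}_{\mu}$.

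The crucial observation is that in the ``monopole bubbling'' stratification every non-top stratum $\mW^{\nu}_{\mu}$ is indexed by a \emph{dominant} $\nu$ with $\mu \lneq \nu \le \lambda$, so the transversal slice produced by Theorem \ref{Thm: Transversal Slice} is $\bW^{\lambda}_{\nu}$ with $\nu$ dominant, i.e., a classical affine Grassmannian slice, which has symplectic singularities by \cite{kamnitzer2014yangians}. Promoting Theorem \ref{Thm: Transversal Slice} to a Poisson statement --- identifying $\mW^{\nu}_{\mu}$ as (an open subset of) a symplectic leaf and invoking the algebraic version of Weinstein's splitting theorem --- then produces, for each point $p \in \mW^{\nu}_{\mu}$, a formal (or \'etale-local) Poisson isomorphism
$$\left(\bW^{\lambda}_{\mu}\right)^{\wedge}_{p} \;\cong\; \left(\mW^{\nu}_{\mu}\right)^{\wedge}_{p} \times \left(\bW^{\lambda}_{\nu}\right)^{\wedge}_{1},$$
whose first factor is smooth symplectic.

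Because symplectic singularity is a condition local in the \'etale (or formal) topology and is preserved under taking products with smooth symplectic factors, the above decomposition propagates the property from the classical slice $\bW^{\lambda}_{\nu}$ to a neighborhood of $p$ in $\bW^{\lambda}_{\mu}$. Letting $\nu$ range over all dominant coweights with $\mu \lneq \nu \le \lambda$ and combining with the smoothness of $\mW^{\lambda}_{\mu}$, the argument covers every point of $\bW^{\lambda}_{\mu}$; the remaining case in which $\mu$ itself is dominant is immediate from \cite{kamnitzer2014yangians}. A final check ensures that the locally defined symplectic forms glue to $\omega$ and that $\omega$ extends to any resolution, but both are formal consequences of the Namikawa--Kaledin local theory once the above picture is in place.

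The main obstacle I anticipate is the Poisson refinement of Theorem \ref{Thm: Transversal Slice}: the scheme-theoretic transversal slice must be shown to carry the transverse Poisson structure compatible with the symplectic leaf structure of $\bW^{\lambda}_{\mu}$. I would approach this via the natural action on $\bW^{\lambda}_{\mu}$ of a suitable torus or loop group together with the explicit Hamiltonian generators of the Coulomb-branch Poisson bracket constructed in \cite{braverman2016coulomb}; once Poisson compatibility is established, the remainder of the proof is essentially a bookkeeping exercise within the local theory of symplectic singularities.
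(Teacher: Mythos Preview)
Your overall strategy---reduce to the dominant case via the transversal slice of Theorem~\ref{Thm: Transversal Slice} and invoke \cite{kamnitzer2014yangians}---is the same as the paper's, but the execution diverges at the key step, and the divergence matters.

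The paper proceeds through Namikawa's criterion (Theorem~\ref{Nam1}): it first deduces that $\bW^{\lambda}_{\mu}$ has rational Gorenstein singularities \emph{purely scheme-theoretically} from Theorem~\ref{Thm: Transversal Slice} and the dominant case (Corollary~\ref{Cor: Rational Gorenstein}), and then separately constructs a global symplectic form on the smooth locus $\mW^{\lambda}_{\mu}$ via the factorization isomorphism (Propositions~\ref{Prop: Poisson Extension} and~\ref{Prop: Factorization}), which reduces the Poisson extension to an explicit $A_1$ computation. Namikawa's theorem then finishes the argument. At no point is a Poisson-compatible version of the transversal slice needed.

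Your route instead asks for exactly that Poisson refinement of Theorem~\ref{Thm: Transversal Slice} in order to transfer the symplectic-singularity property through a Weinstein-type local product. You are right to flag this as the main obstacle: the map $\psi$ in the paper's proof (Lemma~\ref{Lemma: Psi is mono}) involves a conjugation-and-projection that has no evident Poisson meaning, and nothing in the paper establishes Poisson compatibility. There is also a subtlety in your claim that ``symplectic singularity is local in the \'etale topology'': the existence of a symplectic form on the smooth locus is global data, so formal germs at each point do not by themselves glue. In fact, once you grant a global $\omega$ on $\mW^{\lambda}_{\mu}$ (which you take from the Coulomb-branch description), the Poisson refinement becomes superfluous: rational Gorenstein is \'etale-local and follows from the scheme-theoretic slice, and Namikawa applies directly. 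That is precisely the paper's argument---except that the paper produces $\omega$ via factorization rather than via Coulomb-branch machinery, in keeping with its stated aim of an elementary proof valid for all simple $G$.
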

For gauge theories defined by simple quivers, this conjecture has been proven by Weekes using Coulomb branch techniques \cite{weekes2020quiver}. In particular, this implies that $\overline {\mathcal W}^{\lambda}_{\mu}$ has symplectic singularities when $G$ is of ADE-type. Using techniques developed in \cite{nakajima2019coulomb}, his result extends to BCFG-type as well. We reprove the special case of this conjecture for $\overline {\mathcal W}^{\lambda}_{\mu}$ in Section \ref{Sec: Symplectic Singularities} without Coulomb branch machinery. Finally, we end this note with generalization of \cite[Theorem 2.7, 2.9]{kamnitzer2014yangians} to not necessarily dominant $\mu$.

\subsection*{Acknowledgement} The author would like to thank Alex Weekes and Vasily Krylov for discussions and comments on earlier version of this note. This research is supported by Perimeter Institute for Theoretical Physics. Research at Perimeter Institute is supported by the Government of Canada through Industry Canada and by the Province of Ontario through the Ministry of Research and Innovation.

\section{Smoothness of {$\mathcal{W}^{\lambda}_{\mu}$}}\label{Sec: Smoothness}

Let $G$ be a split reductive group scheme over $\mathbb{Z}$, let's recall the definition of the generalized affine Grassmannian slice $\overline {\mathcal W}^{\lambda}_{\mu}$ for a dominant coweight $\lambda$, and a coweight $\mu$ such that $\lambda\ge \mu$ \cite{braverman2016coulomb}. First, we have a functor $\overline {\mathsf W}^{{\lambda}}_{\mu}$ sending a scheme $S$ to the data
\begin{itemize}
\item[(1)] a $G$-bundle $\mathcal F_G$ on $\mathbb{P}^1_S$;
\item[(2)] a trivialization $\sigma$ of $\mathcal F_G$ on $\mathbb{P}^1_S\setminus 0$ with pole of order $\le\lambda$;
\item[(3)] a $B$-structure $\phi$ on $\mathcal F_G$ of degree $w_0\mu$ having fiber $B\times S$ at $\infty\times S\subset \mathbb{P}^1_S$.
\end{itemize}
It's easy to see that $\overline {\mathsf W}^{{\lambda}}_{\mu}$ is represented by a finite type scheme over $\mathbb{Z}$, and we define $\overline {\mathcal W}^{\lambda}_{\mu}$ to be the reduced scheme $\overline {\mathsf W}^{{\lambda}}_{\mu,\mathrm{red}}$.

From the definition, we read out that there is a Cartesian diagram
\begin{center}
\begin{tikzcd}
\overline {\mathcal W}^{{\lambda}}_{\mu}  \arrow[d,"\mathbf p"]  \arrow[r,"\mathbf r"] & \Bun_B^{w_0\mu}(\mathbb{P}^1,\infty) \arrow[d,"p"]\\
\bGr^{\lambda} \arrow[r,"h^{\rightarrow}_G"]  & \Bun_G(\mathbb P^1,\infty)
\end{tikzcd}
\end{center}
where $\Bun_G(\mathbb{P}^1,\infty)$ is the stack of $G$-bundles on $\mathbb{P}^1$ with a trivialization at $\infty$, and $\Bun_B^{w_0\mu}(\mathbb{P}^1,\infty)$ is the degree $w_0\mu$ component. In this diagram, the map $p$ sends a $B$-bundle to its induced $G$-bundle, and the map $h^{\rightarrow}_G$ sends a pair $(\mathcal F_G,\sigma)$ in $\bGr^{\lambda}$ to the $G$-bundle $\mathcal F_G$ together with the canonical trivialization induced from $\sigma$ at $\infty$.

Recall that there is a decomposition $\bGr^{\lambda}=\coprod_{\nu\le \lambda}\Gr^{\nu}$ for dominant $\nu$, thus we have corresponding decomposition for generalized affine Grassmannian slice:
$$\overline {\mathcal W}^{\lambda}_{\mu}=\coprod_{\substack{\nu:\text{ dominant}\\ \mu\le \nu\le \lambda }}\mathcal{W}^{\nu}_{\mu}.$$
This is known as the ``monopole bubbling" in the physics literature.

\subsection{Beilinson-Drinfeld generalized affine Grassmannian slice} If moreover there is a decomposition $\lambda=\sum_{i=1}^N \lambda_i$, we can define the Beilinson-Drinfeld version of the generalized affine Grassmannian slice $\mathcal{W}^{\underline{\lambda}}_{\mu}$, where $\underline{\lambda}=(\lambda_1,\cdots,\lambda_N)$ is the defect type. First, we define a functor $\mathsf W^{\underline{\lambda}}_{\mu}$ sending a scheme $S$ to the data
\begin{itemize}
\item[(1)] a collection of $S$-points $\underline{z}=(z_1,\cdots,z_N)\in \mathbb{A}^N(S)$;
\item[(2)] a $G$-bundle $\mathcal F_G$ on $\mathbb{P}^1_S$;
\item[(3)] a trivialization $\sigma$ of $\mathcal F_G$ on $\mathbb{P}^1_S\setminus \{z_1,\cdots,z_N\}$ with pole \textbf{exactly} of order $\sum_{i=1}^N\lambda_i\cdot z_i$;
\item[(4)] a $B$-structure $\phi$ on $\mathcal F_G$ of degree $w_0\mu$ having fiber $B\times S$ at $\infty\times S\subset \mathbb{P}^1_S$.
\end{itemize}
It's easy to see that $\mathsf W^{\underline{\lambda}}_{\mu}$ is represented by a finite type scheme over $\mathbb{Z}$, and then we define $\mathcal W^{\underline{\lambda}}_{\mu}=\mathsf W^{\underline{\lambda}}_{\mu,\mathrm{red}}$.

\begin{proposition}\label{Prop: Smoothness}
The natural morphism $\mathcal W^{\underline{\lambda}}_{\mu}\to \mathbb{A}^N$ is smooth.
\end{proposition}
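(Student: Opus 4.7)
The plan is to construct a Cartesian square analogous to the one displayed earlier in the section, but adapted to the Beilinson--Drinfeld setting:
\begin{center}
\begin{tikzcd}
\mathcal W^{\underline{\lambda}}_{\mu}  \arrow[d,"\mathbf p"]  \arrow[r,"\mathbf r"] & \Bun_B^{w_0\mu}(\mathbb{P}^1,\infty) \arrow[d,"p"]\\
\tGr^{\underline{\lambda}} \arrow[r,"h^{\rightarrow}_G"]  & \Bun_G(\mathbb{P}^1,\infty)
\end{tikzcd}
\end{center}
where $\tGr^{\underline{\lambda}}$ denotes the open stratum of the Beilinson--Drinfeld affine Grassmannian of defect type $\underline{\lambda}$, parametrizing $(\underline{z},\mathcal F_G,\sigma)$ with $\sigma$ having pole exactly $\sum_i\lambda_i\cdot z_i$. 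Since the structure map factors as $\mathcal W^{\underline{\lambda}}_{\mu}\xrightarrow{\mathbf p}\tGr^{\underline{\lambda}}\to\mathbb{A}^N$, it suffices by transitivity to prove smoothness of each factor.

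Smoothness of $\tGr^{\underline{\lambda}}\to\mathbb{A}^N$ is a standard consequence of the factorization property of the BD Grassmannian together with smoothness of each orbit $\Gr^{\lambda_i}$: over configurations with pairwise distinct $z_i$'s the source is \'etale locally a product of the $\Gr^{\lambda_i}$'s, and smoothness propagates across the diagonals via factorization. For the left vertical map $\mathbf p$, which is the base change of $p$ along $h^{\rightarrow}_G$, it suffices to verify smoothness of $p$ at every $(\mathcal F_G,\phi)$ lying over the image of $h^{\rightarrow}_G$. Both source and target of $p$ are smooth schemes (using framing at $\infty$ and vanishing of higher cohomology on $\mathbb{P}^1$), so smoothness of $p$ at $(\mathcal F_G,\phi)$ amounts to surjectivity of the differential, which by the long exact sequence associated to $0\to\mathfrak b\to\mathfrak g\to\mathfrak g/\mathfrak b\to 0$ twisted by $\mathcal O(-\infty)$ is equivalent to the cohomological vanishing
\[
\H^1(\mathbb{P}^1,(\mathfrak g/\mathfrak b)_{\phi}(-\infty))=0,
\]
where $(\mathfrak g/\mathfrak b)_{\phi}$ is the vector bundle on $\mathbb{P}^1$ associated to the underlying $B$-bundle of $\phi$ and the $B$-representation $\mathfrak g/\mathfrak b$.

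The main obstacle is establishing this cohomological vanishing for not-necessarily-dominant $\mu$. When $\mu$ is dominant, $(\mathfrak g/\mathfrak b)_{\phi}$ admits a canonical filtration whose successive quotients are line bundles on $\mathbb{P}^1$ of non-negative degree, and the vanishing follows by iterated application of the long exact sequence of cohomology. For general $\mu\le\lambda$, some graded pieces of this filtration can carry negative degree and the direct argument fails; one must exploit the fact that $\mathcal F_G$ is trivializable on $\mathbb{P}^1\setminus\{z_1,\ldots,z_N\}$ (because it lies in the image of $h^{\rightarrow}_G$) to control the splitting type of $\phi$, and thereby force the Grothendieck decomposition of $(\mathfrak g/\mathfrak b)_\phi$ to have all summands of degree $\ge 0$. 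This step is the technical heart of the proof, analogous to the dimension-theoretic input used in \cite[Lemma 2.16]{braverman2016coulomb} to establish Cohen--Macaulayness of $\overline{\mathcal W}^{\lambda}_\mu$.
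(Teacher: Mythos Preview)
Your approach contains a genuine gap: the map $\mathbf p\colon \mathcal W^{\underline\lambda}_\mu \to \tGr^{\underline\lambda}$ is \emph{not} smooth in general, so factoring the structure morphism through $\tGr^{\underline\lambda}$ cannot establish the result. To see this concretely, take $N=1$ and $\mu$ a nonzero dominant coweight. In that range the Cartesian square recovers the classical description $\overline{\mathcal W}^\lambda_\mu = \mathcal W_\mu \cap \overline{\Gr}^\lambda$, and $\mathbf p$ is simply the closed immersion $\overline{\mathcal W}^\lambda_\mu \hookrightarrow \overline{\Gr}^\lambda$ of positive codimension---certainly not smooth. Correspondingly, the cohomological vanishing you isolate fails: the graded pieces of $(\mathfrak g/\mathfrak b)_\phi$ have degrees $-\langle \mu, \gamma^\vee\rangle$ as $\gamma^\vee$ ranges over the positive roots, which are \emph{nonpositive} for dominant $\mu$ (you have the cases reversed; the filtration argument goes through for \emph{anti}-dominant $\mu$). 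For $G$ of semisimple rank one there is only a single graded piece, so no extension data can repair this. The constraint you hope to exploit---that $\mathcal F_G$ be trivializable on $\mathbb{P}^1\setminus\{z_i\}$---is vacuous, since every $G$-bundle on $\mathbb{P}^1$ is trivial on the complement of any point.

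The paper's argument sidesteps this obstruction by enlarging the picture to the Beilinson--Drinfeld Hecke stack $\mathsf H^{\underline\lambda}_{\mathrm{BD}}$, which carries \emph{two} projections $h^\leftarrow_G, h^\rightarrow_G$ to $\Bun_G(\mathbb{P}^1,\infty)$. Pulling $p$ back along $h^\rightarrow_G$ produces a space $\mathsf C^{\underline\lambda}_\mu$ in which $\mathsf W^{\underline\lambda}_\mu$ sits as an \emph{open} subscheme (because the fiber of $h^\leftarrow_G$ over the trivial bundle is open). The decisive point is that $h^\rightarrow_G$, unlike $p$, is a locally trivial fibration in the smooth topology with fiber $\tGr^{\underline\lambda^*}_{\mathrm{BD}}$, and hence so is its base change $q\colon \mathsf C^{\underline\lambda}_\mu \to \Bun_B^{w_0\mu}(\mathbb{P}^1,\infty)$; smoothness over $\mathbb{A}^N$ then follows from smoothness of the base $\Bun_B^{w_0\mu}(\mathbb{P}^1,\infty)$ together with smoothness of $\Gr^{\underline\lambda}_{\mathrm{BD}}\to\mathbb{A}^N$. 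In other words, the paper projects in the \emph{opposite} direction---to $\Bun_B$ rather than to the Grassmannian---precisely to avoid the non-smoothness of $p$ that blocks your route.
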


\begin{proof}
We can read out from the definition that $\mathsf W^{\underline{\lambda}}_{\mu}$ fits into the commutative diagram
\begin{center}
\begin{tikzcd}
\Spec \mathbb{Z} \arrow[d,"i"] & \mathsf {Gr}^{\underline{\lambda}}_{\mathrm{BD}} \arrow[l]  \arrow[d] & \mathsf W^{\underline{\lambda}}_{\mu}  \arrow[l]  \arrow[d,"j"]\\
\Bun_G(\mathbb{P}^1,\infty)  & \mathsf H^{\underline{\lambda}}_{\mathrm{BD}} \arrow[l,"h^{\leftarrow}_G" ']  \arrow[d,"h^{\rightarrow}_G"] & \mathsf {C}^{\underline{\lambda}}_{\mu}  \arrow[l,"p'" ']  \arrow[d,"q"]\\
&\Bun_G(\mathbb{P}^1,\infty) &\Bun_B^{w_0\mu}(\mathbb{P}^1,\infty) \arrow[l,"p" '] 
\end{tikzcd}
\end{center}
where all squares are Cartesian. In this diagram, $\Bun_G(\mathbb{P}^1,\infty)  \overset{h^{\leftarrow}_G}{\longleftarrow} \mathsf H^{\underline{\lambda}}_{\mathrm{BD}}\overset{h^{\rightarrow}_G}{\longrightarrow} \Bun_G(\mathbb{P}^1,\infty)$ is the Beilinson-Drinfeld Hecke stack, defined by sending a scheme $S$ to the groupoid of
\begin{itemize}
\item[(1)] a collection of $S$-points $\underline{z}=(z_1,\cdots,z_N)\in \mathbb{A}^N(S)$;
\item[(2)] a $G$-bundle $\mathcal F^0_G$ on $\mathbb{P}^1_S$ with a trivialization $\sigma^0_{\infty}$ at $\infty$;
\item[(3)] a $G$-bundle $\mathcal F^1_G$ on $\mathbb{P}^1_S$ with a trivialization $\sigma^1_{\infty}$ at $\infty$;
\item[(4)]  an isomorphism $\varphi:\mathcal F^0_G|_{\mathbb{P}^1_S\setminus \{z_1,\cdots,z_N\}}\cong \mathcal F^1_G|_{\mathbb{P}^1_S\setminus \{z_1,\cdots,z_N\}}$ with pole \textbf{exactly} of order $\sum_{i=1}^N\lambda_i\cdot z_i$, and $\varphi$ maps $\sigma^0_{\infty}$ to $\sigma^1_{\infty}$;
\end{itemize}
$h^{\leftarrow}_G$ (resp. $h^{\rightarrow}_G$) maps the above data to $(\mathcal F^0_G,\sigma^0_{\infty})$ (resp. $(\mathcal F^1_G,\sigma^1_{\infty})$). $i:\Spec \mathbb{Z}\to \Bun_G(\mathbb{P}^1,\infty)$ is the trivial $G$-bundle with the canonical trivialization at $\infty$. Note that $\mathsf {C}^{\underline{\lambda}}_{\mu}$ is nothing but a variation of $\mathsf W^{\underline{\lambda}}_{\mu}$ by allowing the domain of the isomorphism $\sigma:\mathcal F^{\mathrm{triv}}_G|_{\mathbb{P}^1_S\setminus \{z_1,\cdots,z_N\}}\cong \mathcal F_G|_{\mathbb{P}^1_S\setminus \{z_1,\cdots,z_N\}}$ to vary.\\

Since $i$ induces an isomorphism between $\Spec \mathbb{Z}$ and the open substack $\Bun_G^{\mathrm{triv}}(\mathbb{P}^1,\infty)\subset \Bun_G(\mathbb{P}^1,\infty)$ of trivial $G$-bundles, we see that $j$ is an open immersion, thus the proposition follows from the following 
\begin{itemize}
\item[(•)] the natural morphism $\mathsf C^{\underline{\lambda}}_{\mu,\mathrm{red}}\to \mathbb{A}^N$ is smooth.
\end{itemize}
To prove this assertion, we notice that $h^{\rightarrow}_G$ is a locally trivial fibration in the smooth topology with fiber $\mathsf {Gr}^{\underline{\lambda}^*}_{\mathrm{BD}}$, hence $q$ has the same property. Since $\Bun_H(\mathbb{P}^1,\infty)$ is a smooth stack on $\mathbb{Z}$ for any smooth group scheme $H$, the assertion (•) follows from next lemma.
\end{proof}

\begin{lemma}
The natural morphism $\mathrm{Gr}^{\underline{\lambda}}_{\mathrm{BD}}:=\mathsf {Gr}^{\underline{\lambda}^*}_{\mathrm{BD},\mathrm{red}}\to \mathbb{A}^N$ is smooth.
\end{lemma}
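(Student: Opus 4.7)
The plan is to verify directly that the morphism $\pi \colon \mathsf{Gr}^{\underline{\lambda}^*}_{\mathrm{BD}} \to \mathbb{A}^N$ is formally smooth. Combined with the finite type hypothesis, this yields smoothness; and because each geometric fiber is a product $\prod_j \mathrm{Gr}^{\nu_j}_{w_j}$ of smooth affine Grassmannian $G_{\mathcal O}$-orbits (here $w_1,\dots,w_m$ are the distinct values among $z_1,\dots,z_N$ and $\nu_j = \sum_{i\,:\,z_i=w_j}\lambda_i$), hence reduced and equidimensional, the source itself is automatically reduced and the conclusion descends without change to $\mathrm{Gr}^{\underline{\lambda}}_{\mathrm{BD}} = \mathsf{Gr}^{\underline{\lambda}^*}_{\mathrm{BD},\mathrm{red}}$. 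This approach parallels the Cohen--Macaulay argument of \cite[Lemma 2.16]{braverman2016coulomb} referenced earlier.

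Concretely, for a square-zero extension $A \twoheadrightarrow A/I$, I take an $A/I$-point $x$ of $\mathsf{Gr}^{\underline{\lambda}^*}_{\mathrm{BD}}$ lying over some $\underline{z}_0 \in \mathbb{A}^N(A/I)$ together with a lift $\underline{z} \in \mathbb{A}^N(A)$, and construct a lift of $x$ to an $A$-point. Unwinding the definition, $x$ consists of a $G$-bundle $\mathcal F^0_G$ on $\mathbb{P}^1_{A/I}$ trivialized at $\infty$ by some $\sigma^0_\infty$, together with an isomorphism $\varphi_0 \colon \mathcal F^0_G \cong \mathcal F^{\mathrm{triv}}_G$ on $\mathbb{P}^1_{A/I} \setminus \{\underline{z}_0\}$ with pole exactly $\sum_i \lambda_i \cdot z_{0,i}$.

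The lift is produced in two deformation-theoretic steps. First, deform the pair $(\mathcal F^0_G,\sigma^0_\infty)$ to a pair over $\mathbb{P}^1_A$: the obstruction lies in $H^2\bigl(\mathbb{P}^1_{A/I},\,\mathrm{ad}(\mathcal F^0_G)(-\infty) \otimes_{A/I} I\bigr)$, which vanishes because $\mathbb{P}^1$ has cohomological dimension one. Second, extend $\varphi_0$ to an isomorphism on $\mathbb{P}^1_A \setminus \{\underline{z}\}$ compatibly with the trivialization at $\infty$: since the open $\mathbb{P}^1_A \setminus (\{\underline{z}\} \cup \{\infty\})$ is affine (the complement of a relative effective Cartier divisor of positive degree), the obstruction and the compatibility requirement at $\infty$ are controlled by $H^1$- and $H^0$-type statements on an affine scheme and both can be arranged. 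Finally, since ``pole exactly $\sum \lambda_i z_i$'' cuts out an open condition inside the ambient closure $\bGr$, and nilpotent thickenings cannot exit an open subscheme, the lift produced automatically lies in $\mathsf{Gr}^{\underline{\lambda}^*}_{\mathrm{BD}}$ and not merely in its closure.

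The main obstacle is the bookkeeping in the second step: the relative divisor $\{\underline{z}\} \subset \mathbb{P}^1_A$ depends nontrivially on the lift, and the $z_i$'s may coincide or separate between $A/I$ and $A$, so one must carefully identify the pole datum in families. This is ultimately handled by standard deformation theory of $G$-torsors on families of complements of relative Cartier divisors in $\mathbb{P}^1$, together with the affineness noted above.
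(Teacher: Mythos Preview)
There is a genuine gap in your argument. Your two-step deformation construction produces a lift that lands in the \emph{full} Beilinson--Drinfeld Grassmannian $\Gr_{\mathrm{BD}}$ (as an ind-scheme), not in the stratum $\mathsf{Gr}^{\underline{\lambda}^*}_{\mathrm{BD}}$ or even in its closure $\overline{\Gr}^{\underline{\lambda}}_{\mathrm{BD}}$. In Step~2, the extensions $\varphi$ of $\varphi_0$ over the affine open $U=\mathbb{P}^1_A\setminus\{\underline{z}\}$ form a torsor under $\ker\bigl(G(U)\to G(U_0)\bigr)$, and a generic choice in this torsor will have strictly larger pole order than $\varphi_0$. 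Your final sentence (``pole exactly $\sum\lambda_i z_i$ cuts out an open condition inside the ambient closure $\bGr$, and nilpotent thickenings cannot exit an open subscheme'') presupposes that the lift already sits in $\overline{\Gr}^{\underline{\lambda}}_{\mathrm{BD}}$; but nothing you wrote constrains the pole of $\varphi$ from above. A locally closed subscheme is open in its closure, not in the ambient ind-scheme, so the ``nilpotent thickenings cannot exit an open'' principle does not apply. Concretely, for $G=\SL_2$, $N=1$, $\lambda_1=0$: starting from the identity $\varphi_0$, your Step~2 allows $\varphi=1+\epsilon\cdot g$ for any $g\in\mathfrak{g}\bigl[(t-z)^{-1}\bigr]$, and almost none of these lie in $\Gr^0_{\mathrm{BD}}$.

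What your argument actually establishes is formal smoothness of the full ind-scheme $\Gr_{\mathrm{BD}}\to\mathbb{A}^N$, which is well known but does not descend to the strata. The paper's proof avoids this difficulty entirely: it invokes the (cited as well-known) fact that $\overline{\Gr}^{\underline{\lambda}}_{\mathrm{BD}}\to\mathbb{A}^N$ is \emph{flat} with geometrically reduced fibers, then observes that the geometric fibers of the open locus $\Gr^{\underline{\lambda}}_{\mathrm{BD}}$ are products of $G_{\mathcal O}$-orbits, hence smooth; flatness together with smooth fibers gives smoothness. The flatness of the closure is doing real work here, and your deformation approach would need an additional argument---essentially equivalent to controlling the pole order of the lift---to match it.
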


\begin{proof}
Recall the well-known fact that $\overline{\mathrm{Gr}}^{\underline{\lambda}}_{\mathrm{BD}}\to \mathbb{A}^N$ is flat with geometrically reduced fibers. Now $\mathrm{Gr}^{\underline{\lambda}}_{\mathrm{BD}}\subset \overline{\mathrm{Gr}}^{\underline{\lambda}}_{\mathrm{BD}}$ and its geometric fibers along the projection to $\mathbb{A}^N$ are products of $G_{\mathcal{O}}$-orbits with reduced scheme structures, so the fibers are smooth, hence $\mathrm{Gr}^{\underline{\lambda}}_{\mathrm{BD}}\to \mathbb{A}^N$ is smooth.
\end{proof}

In fact, we can formulate a similar commutative diagram to the diagram that is used in the proof of Proposition \ref{Prop: Smoothness}, but allow the Hecke stack to have poles of order less than or equal to $\lambda$, then the up-right corner is $\overline{\mathcal W}^{\lambda}_{\mu}$, which maps smoothly to $\overline{\mathsf C}^{\lambda}_{\mu}$ (defined as the pullback of the right projection of the Hecke stack). Now the projection $\overline{\mathsf C}^{\lambda}_{\mu}\to \Bun^{w_0\mu}_B(\mathbb P^1, \infty)$ is a fibration with fiber $\bGr^{\lambda}$. It is known that the smooth locus of $\bGr^{\lambda}$ is exactly $\Gr^{\lambda}$, thus we have the following sharper result:
\begin{proposition}\label{Prop: Smooth Locus}
The natural morphism $\bW^{\underline{\lambda}}_{\mu}\to \mathbb{A}^N$ is flat and the smooth locus is $\mW^{\underline{\lambda}}_{\mu}$.
\end{proposition}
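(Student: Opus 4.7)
The plan is to mirror the diagram used in the proof of Proposition \ref{Prop: Smoothness}, but with the Hecke stack replaced by the \emph{closed} Beilinson--Drinfeld Hecke stack $\overline{\mathsf H}^{\underline{\lambda}}_{\mathrm{BD}}$ parametrizing modifications of order $\le \sum \lambda_i\cdot z_i$. Pulling back along the inclusion $i:\Spec\Z\hookrightarrow \Bun_G(\mathbb{P}^1,\infty)$ of the open substack of trivial $G$-bundles, and separately along the projection $p:\Bun_B^{w_0\mu}(\mathbb{P}^1,\infty)\to \Bun_G(\mathbb{P}^1,\infty)$, yields an analogous commutative diagram whose upper-right corner is $\overline{\mathsf W}^{\underline{\lambda}}_{\mu}$, with an open immersion $j:\overline{\mathsf W}^{\underline{\lambda}}_{\mu}\hookrightarrow \overline{\mathsf C}^{\underline{\lambda}}_{\mu}$, where $\overline{\mathsf C}^{\underline{\lambda}}_{\mu}$ is the pullback of the right projection of $\overline{\mathsf H}^{\underline{\lambda}}_{\mathrm{BD}}$ along $p$. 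Passing to reduced schemes is harmless because flat base change of $\overline{\mathrm{Gr}}^{\underline\lambda}_{\mathrm{BD}}$ (which is already reduced) preserves reducedness.

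For flatness, the key input is the well-known fact that $\overline{\mathrm{Gr}}^{\underline{\lambda}}_{\mathrm{BD}}\to \mathbb{A}^N$ is flat with geometrically reduced fibers. The right-projection map $h^{\rightarrow}_G:\overline{\mathsf H}^{\underline{\lambda}}_{\mathrm{BD}}\to \Bun_G(\mathbb{P}^1,\infty)$ is a locally trivial fibration in the smooth topology with fiber $\overline{\mathrm{Gr}}^{\underline{\lambda}^*}_{\mathrm{BD}}$, so the base-changed map $q:\overline{\mathsf C}^{\underline{\lambda}}_{\mu}\to \Bun_B^{w_0\mu}(\mathbb{P}^1,\infty)$ has the same structure. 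Since $\Bun_B^{w_0\mu}(\mathbb{P}^1,\infty)$ is smooth over $\Z$, composing with the projection to $\mathbb{A}^N$ shows that $\overline{\mathsf C}^{\underline{\lambda}}_{\mu,\mathrm{red}}\to \mathbb{A}^N$ is flat, and restricting along the open immersion $j$ gives flatness of $\overline{\mathcal W}^{\underline{\lambda}}_{\mu}\to \mathbb{A}^N$.

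For the identification of the smooth locus, since $j$ is an open immersion and $\Bun_B^{w_0\mu}(\mathbb{P}^1,\infty)$ is smooth over $\Z$, the relative smooth locus of $\overline{\mathcal W}^{\underline{\lambda}}_{\mu}\to \mathbb{A}^N$ is precisely the preimage under $q\circ j$ of the relative smooth locus of $\overline{\mathrm{Gr}}^{\underline{\lambda}}_{\mathrm{BD}}\to \mathbb{A}^N$. So the question reduces to identifying this last locus. Over a geometric point $\underline{z}\in \mathbb{A}^N$ with coincidence pattern partitioning the $\lambda_i$ into groups with sums $\nu_1,\dots,\nu_k$, the fiber is $\prod_j \overline{\mathrm{Gr}}^{\nu_j}$, whose smooth locus is $\prod_j \mathrm{Gr}^{\nu_j}$, and this is exactly the fiber of the open sub-ind-scheme $\mathrm{Gr}^{\underline{\lambda}}_{\mathrm{BD}}\subset \overline{\mathrm{Gr}}^{\underline{\lambda}}_{\mathrm{BD}}$. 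Thus the relative smooth locus of $\overline{\mathrm{Gr}}^{\underline{\lambda}}_{\mathrm{BD}}\to \mathbb{A}^N$ equals $\mathrm{Gr}^{\underline{\lambda}}_{\mathrm{BD}}$, and pulling back yields $\mathcal{W}^{\underline{\lambda}}_{\mu}$ as claimed.

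The main obstacle is the fiberwise identification of the smooth locus at points of $\mathbb{A}^N$ where collisions occur; everything else is formal diagram-chasing together with inputs recorded in the proof of Proposition \ref{Prop: Smoothness}. The cited fact that the smooth locus of $\overline{\mathrm{Gr}}^{\lambda}$ is $\mathrm{Gr}^{\lambda}$ handles each fiber individually, and flatness of $\overline{\mathrm{Gr}}^{\underline{\lambda}}_{\mathrm{BD}}\to \mathbb{A}^N$ together with the smoothness of the base upgrades this pointwise statement to a global identification of the relative smooth locus.
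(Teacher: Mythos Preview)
Your proposal is correct and follows essentially the same approach as the paper: replace the open BD Hecke stack in the diagram of Proposition~\ref{Prop: Smoothness} by its closed version $\overline{\mathsf H}^{\underline{\lambda}}_{\mathrm{BD}}$, use that $j$ is an open immersion and that $q$ is a smooth-local fibration with fiber $\overline{\mathrm{Gr}}^{\underline{\lambda}^*}_{\mathrm{BD}}$, and then invoke the known identification of the smooth locus of $\overline{\mathrm{Gr}}^{\lambda}$ (resp.\ of the fibers of $\overline{\mathrm{Gr}}^{\underline{\lambda}}_{\mathrm{BD}}\to\mathbb{A}^N$) with $\mathrm{Gr}^{\lambda}$ (resp.\ $\mathrm{Gr}^{\underline{\lambda}}_{\mathrm{BD}}$). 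The paper's argument is terser and is phrased for the non-BD case just before stating the BD proposition, whereas you spell out the fiberwise identification at collision points and the passage from fiberwise to relative smoothness via flatness; one minor phrasing slip is that ``preimage under $q\circ j$'' does not literally land in $\overline{\mathrm{Gr}}^{\underline{\lambda}}_{\mathrm{BD}}$, but your intended meaning (matching up via the fibration structure of $q$) is clear and correct.
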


\section{Transversal Slice of $\mathcal W^{\nu}_{\mu}$ in $\overline{\mathcal W}^{\lambda}_{\mu}$}\label{Sec: Transversal Slice}

In this section we will prove Theorem \ref{Thm: Transversal Slice}. For simplicity we will assume that $G$ is a complex reductive group, but the proof holds for reductive groups over arbitrary algebraically closed field. We begin with proving a variant of this theorem.

\subsection{Infinite $\boldsymbol {\lambda}$} By sending $\lambda $ to infinity, we mean considering $\mW_{\mu}$ (resp. $\mW_{\nu}$) instead of $\overline{\mathcal W}^{\lambda}_{\mu}$ (resp. $\overline{\mathcal W}^{\lambda}_{\nu}$). First of all, we notice that there exists a natural $G_1[\![z^{-1}]\!]$ action on $\mathcal{W}_{\mu}$ which makes $\mathbf{p}: \mathcal{W}_{\mu}\to \mathrm{Gr}$ equivariant. In fact, we have \cite[2(xi)]{braverman2016coulomb}
$$\mathcal{W}_{\mu}=B_1[\![z^{-1}]\!]z^{\mu}B_1^{-}[\![z^{-1}]\!]=G_1[\![z^{-1}]\!]z^{\mu}B^{-}(\!(z^{-1})\!)/B^{-}[z]$$and the natural projection $\mathbf{p}: \mathcal{W}_{\mu}\to \mathrm{Gr}$ is a further step quotient by $G[z]$. So the left multiplication by $G_1[\![z^{-1}]\!]$ is an action on $\mathcal{W}_{\mu}$ and $\mathbf{p}$ is equivariant with respect to this action and the natural left multiplication of $G_1[\![z^{-1}]\!]$ on $\mathrm{Gr}$.

Next, recall that $\mathcal W_{\nu}$ is a transversal slice of $\mathrm{Gr}^{\nu}$ in $\mathrm{Gr}$, in fact there is an isomorphism: $$\mathcal W_{\nu}\cong G_1[\![z^{-1}]\!]\cap z^{\nu} G_1[\![z^{-1}]\!]z^{-\nu},\; U[z]z^{\nu}G[z]/G[z]\cong G[z]\cap z^{\nu} G_1[\![z^{-1}]\!]z^{-\nu}.$$Let's call the latter space $\mathcal U^{\nu}$, it's an open subvariety of $\mathrm {Gr}^{\nu}$. In this way we identify $\mathcal W_{\nu}$ and $\mathcal U^{\nu}$ with subgroups of $z^{\nu} G_1[\![z^{-1}]\!]z^{-\nu}$ and the multiplication maps
\begin{align*}
m_l:G_1[\![z^{-1}]\!]\cap z^{\nu} G_1[\![z^{-1}]\!]z^{-\nu}\times G[z]\cap z^{\nu} G_1[\![z^{-1}]\!]z^{-\nu}\to z^{\nu} G_1[\![z^{-1}]\!]z^{-\nu}\\
m_r:G[z]\cap z^{\nu} G_1[\![z^{-1}]\!]z^{-\nu}\times G_1[\![z^{-1}]\!]\cap z^{\nu} G_1[\![z^{-1}]\!]z^{-\nu}\to z^{\nu} G_1[\![z^{-1}]\!]z^{-\nu}
\end{align*}
are isomorphisms of schemes. We focus on $m_l$ for now. Since $z^{\nu} G_1[\![z^{-1}]\!]z^{-\nu}\to z^{\nu} G_1[\![z^{-1}]\!]z^{-\nu}\cdot z^{\nu}G[z]/G[z]$ is open immersion, we obtain an open immersion $$m_l:\mathcal W_{\nu}\times \mathcal U^{\nu} \to \mathrm{Gr},$$such that $1\times \mathcal U^{\nu}$ (resp. $ \mathcal W_{\nu} \times 1$) are mapped identically to $\mathcal U^{\nu}$ (resp. $\mathcal W_{\nu}$).

The result we state below is a special case of the scheme-theoretic version of \cite[Corollary 3.2.21]{chriss2009representation}, we reprove it for completeness.

\begin{proposition}
A transversal slice of $\mathbf{p}^{-1}(\mathrm{Gr}^{\nu})=\mathcal W^{\nu}_{\mu}$ in $\mathcal W_{\mu}$ is isomorphic to $\mathcal W_{\nu}$.
\end{proposition}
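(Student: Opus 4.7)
The plan is to exploit the $G_1[\![z^{-1}]\!]$-equivariance of $\mathbf{p}$ together with the open immersion $m_l:\mathcal W_\nu\times\mathcal U^\nu\hookrightarrow\mathrm{Gr}$ just established: I will pull this product structure back along $\mathbf{p}$ to obtain a product decomposition of an open subset of $\mathcal W_\mu$ containing a piece of $\mathcal W^\nu_\mu$, with one factor equal to $\mathcal W_\nu$. That factor will be the desired transversal slice.

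First, set $\widetilde{\mathcal U}^\nu := \mathbf{p}^{-1}(\mathcal U^\nu)$, which is an open subscheme of $\mathcal W^\nu_\mu=\mathbf{p}^{-1}(\mathrm{Gr}^\nu)$ since $\mathcal U^\nu$ is open in $\mathrm{Gr}^\nu$. Using the left action of $\mathcal W_\nu\subset G_1[\![z^{-1}]\!]$ on $\mathcal W_\mu$, I define
\[
    \Psi : \mathcal W_\nu\times\widetilde{\mathcal U}^\nu \longrightarrow \mathbf{p}^{-1}(\mathcal W_\nu\cdot\mathcal U^\nu),\qquad (w,y)\longmapsto w\cdot y.
\]
By equivariance of $\mathbf{p}$, one has $\mathbf{p}\circ\Psi = m_l\circ(\mathrm{id}\times\mathbf{p}|_{\widetilde{\mathcal U}^\nu})$. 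To show $\Psi$ is an isomorphism of schemes I will exhibit a two-sided inverse: for $x\in\mathbf{p}^{-1}(\mathcal W_\nu\cdot\mathcal U^\nu)$, apply $m_l^{-1}$ to $\mathbf{p}(x)$ to get the unique scheme-theoretic factorization $\mathbf{p}(x)=w\cdot u$ with $w\in\mathcal W_\nu$ and $u\in\mathcal U^\nu$, and send $x\mapsto(w,w^{-1}\cdot x)$. Equivariance forces $\mathbf{p}(w^{-1}\cdot x)=u\in\mathcal U^\nu$, so $w^{-1}\cdot x\in\widetilde{\mathcal U}^\nu$, and mutual inversion is immediate.

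With the product decomposition $\Psi$ in hand, I observe that under $m_l^{-1}$ the intersection $\mathrm{Gr}^\nu\cap(\mathcal W_\nu\cdot\mathcal U^\nu)$ equals $\{1\}\times\mathcal U^\nu$, because the slice $\mathcal W_\nu$ meets the orbit $\mathrm{Gr}^\nu$ only at the base point $z^\nu$. Pulling this back through $\Psi$ identifies $\mathcal W^\nu_\mu\cap\mathbf{p}^{-1}(\mathcal W_\nu\cdot\mathcal U^\nu)$ with $\{1\}\times\widetilde{\mathcal U}^\nu$. Hence for any $x_0\in\widetilde{\mathcal U}^\nu$, the locally closed subscheme $\Psi(\mathcal W_\nu\times\{x_0\})\subset\mathcal W_\mu$ meets $\mathcal W^\nu_\mu$ transversally in the single point $x_0$ and is isomorphic as a scheme to $\mathcal W_\nu$, which yields the proposition.

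The main obstacle is not conceptual but technical: one must ensure $\Psi$ is an \emph{isomorphism of schemes} rather than just a bijection on points. The key ingredient is that $m_l$ is an open immersion (so $m_l^{-1}$ is a morphism), which legitimizes the scheme-theoretic factorization of $\mathbf{p}(x)$ into a $\mathcal W_\nu$-factor and a $\mathcal U^\nu$-factor; once that is in place, everything else reduces to standard bookkeeping with the equivariant left action.
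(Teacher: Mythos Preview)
Your argument is correct and is essentially identical to the paper's: both pull back the open immersion $m_l:\mathcal W_\nu\times\mathcal U^\nu\hookrightarrow\Gr$ along the equivariant map $\mathbf p$, and construct the inverse to the action map by $x\mapsto(g,g^{-1}\cdot x)$ using the unique factorization $\mathbf p(x)=g\cdot y$ supplied by $m_l^{-1}$. The only addition in the paper is a final remark that Weyl conjugates of $\mathcal U^\nu$ cover $\Gr^\nu$, so the same construction yields a transversal slice isomorphic to $\mathcal W_\nu$ through \emph{every} point of $\mathcal W^\nu_\mu$, not just those over $\mathcal U^\nu$; you may want to append that sentence. (A trivial notational caution: the paper reserves the symbol $\widetilde{\mathcal U}^\nu$ for $\mathcal U^\nu z^\nu G[z]$ in the next subsection, so your choice of notation for $\mathbf p^{-1}(\mathcal U^\nu)$ would clash there.)
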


\begin{proof}
Consider the Cartesian diagram
\begin{center}
\begin{tikzcd}
 X \arrow[r,"m_l'"] \arrow[d] & \mathcal W_{\mu} \arrow[d,"\mathbf{p}"]\\
\mathcal W_{\nu}\times \mathcal U^{\nu} \arrow[r,"m_l"]  & \mathrm{Gr}
\end{tikzcd}
\end{center}
so $m_l'$ is open immersion as well. We claim that $X\cong \mathcal W_{\nu} \times \mathbf{p}^{-1}(\mathcal U^{\nu})\subset \mathcal W_{\nu} \times \mathcal W^{\nu}_{\mu}$. Let $S$ be a $\mathbb{C}$-scheme, given an $S$-point $x$ in $X$, we can write $\mathbf{p}(x)=g\cdot y$ for a unique pair $(g,y)\in \mathcal W_{\nu}(S)\times \mathcal U^{\nu}(S)$, so $g^{-1}\cdot x\in \mathbf{p}^{-1}(\mathcal U^{\nu})(S)$, and we can write $x=g\cdot(g^{-1}\cdot x)$, this gives rise to a morphism $f:X\to \mathcal W_{\nu} \times \mathbf{p}^{-1}(\mathcal U^{\nu})$. Conversely, the action map $\mathcal W_{\nu} \times \mathbf{p}^{-1}(\mathcal U^{\nu})\to \mathcal W_{\mu}$ factors through $X$, and it's easy to see that this is inverse to $f$.

Since Weyl conjugations of $ {\mathcal U}^{\nu}$ is a covering of $\Gr^{\nu}$, and same method can be applied to $\mathcal W_{\nu}\times w\mathcal U^{\nu}w^{-1}$ for Weyl group element $w$, hence we conclude that a transversal slice of $\mathbf{p}^{-1}(\mathrm{Gr}^{\nu})=\mathcal W^{\nu}_{\mu}$ in $\mathcal W_{\mu}$ is isomorphic to $\mathcal W_{\nu}$.
\end{proof}

One might wonder if this result, after truncation to finite $\lambda$, directly proves the Theorem \ref{Thm: Transversal Slice}. The answer is ``No", because the image of $\bW^{\lambda}_{\nu}\times \mathbf{p}^{-1}(\mathcal U^{\nu})$ under $m_l$ is not contained in $\bW^{\lambda}_{\mu}$. For example, consider $G=\SL_2$, $\mu=0,\nu=\alpha,\lambda=2\alpha$, where $\alpha$ is the unique positive coroot, in this case generalized transversal slices are usual transversal slices so we can embed them as subvarieties of $\Gr_G$. Then we take two elements
\begin{align*}
\begin{bmatrix}
1 & 0\\
z^{-3} & 1\\
\end{bmatrix}
\cdot z^{\alpha}\in \bW^{2\alpha}_{\alpha},\; 
\begin{bmatrix}
1 & z\\
0 & 1\\
\end{bmatrix}
\cdot z^{\alpha}\in \mW_0\cap \mathcal U^{\alpha}.
\end{align*}
The map $m_l$ sends this pair to 
\begin{align*}
\begin{bmatrix}
1 & 0\\
z^{-3} & 1\\
\end{bmatrix}
\cdot
\begin{bmatrix}
1 & z\\
0 & 1\\
\end{bmatrix}
\cdot z^{\alpha}=
\begin{bmatrix}
z & 1\\
z^{-2} & z^{-1}+z^{-3}\\
\end{bmatrix},
\end{align*}
which is not in $\bGr^{2\alpha}$. So we have to do something more to go from infinite $\lambda$ to finite $\lambda$.

\subsection{Finite $\boldsymbol {\lambda}$}

First of all, we have shown above that the map $$m_r:\mathcal U^{\nu}\times \mathcal W_{\nu}\to \mathrm{Gr},$$is an open immersion such that $\mathcal U^{\nu}\times 1$ (resp. $1\times \mathcal W_{\nu}$) are mapped identically to $\mathcal U^{\nu}$ (resp. $\mathcal W_{\nu}$). Next lemma show that by considering $m_r$ instead of $m_l$, the truncation to finite $\lambda$ gives rise to a transversal slice of $\mathcal U^{\nu}$ in $\bGr^{\lambda}$.

\begin{lemma}
By identifying $\mathcal U^{\nu}\times \mathcal W_{\nu}$ with the image of $m_r$, we have $$(\mathcal U^{\nu}\times \mathcal W_{\nu})\cap \overline{\mathrm {Gr}}^{\lambda}=\mathcal U^{\nu}\times \overline{\mathcal W}^{\lambda}_{\nu}.$$Hence the restriction of $m_r$ to $\mathcal U^{\nu}\times \overline{\mathcal W}^{\lambda}_{\nu}$ gives an open immersion to $\overline{\mathrm {Gr}}^{\lambda}$.
\end{lemma}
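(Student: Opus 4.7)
The plan is to verify the equality $(\mathcal U^{\nu}\times \mathcal W_{\nu})\cap \overline{\mathrm{Gr}}^{\lambda}=\mathcal U^{\nu}\times \overline{\mathcal W}^{\lambda}_{\nu}$ as reduced subschemes of $\mathcal U^{\nu}\times \mathcal W_{\nu}$, where the left-hand side is interpreted as the scheme-theoretic preimage $m_r^{-1}(\overline{\mathrm{Gr}}^{\lambda})$ under the open immersion $m_r$. The key observation is that $\mathcal U^{\nu}\subset G[z]\cdot G[z]/G[z]$ comes from loops in $G[z]$, while $\overline{\mathrm{Gr}}^{\lambda}$ is stable under the left $G[z]$-action. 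Via the formula $m_r(u,w)=u\cdot w$ in $\mathrm{Gr}$, the condition $m_r(u,w)\in\overline{\mathrm{Gr}}^{\lambda}$ is therefore equivalent to $w\in\overline{\mathrm{Gr}}^{\lambda}$, independently of the ``twisting'' factor $u$; this gives the product decomposition at once, provided one identifies $\mathcal W_{\nu}\cap\overline{\mathrm{Gr}}^{\lambda}$ with $\overline{\mathcal W}^{\lambda}_{\nu}$.

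For the inclusion $\mathcal U^{\nu}\times\overline{\mathcal W}^{\lambda}_{\nu}\subset (\mathcal U^{\nu}\times \mathcal W_{\nu})\cap \overline{\mathrm{Gr}}^{\lambda}$, I take a functorial pair $(u,w)$ with $u\in\mathcal U^{\nu}(S)$ and $w\in\overline{\mathcal W}^{\lambda}_{\nu}(S)\subset\overline{\mathrm{Gr}}^{\lambda}(S)$; since left multiplication by $u\in G[z](S)$ preserves $\overline{\mathrm{Gr}}^{\lambda}$, the image $u\cdot w$ lies in $\overline{\mathrm{Gr}}^{\lambda}$. For the reverse inclusion, I take $(u,w)\in(\mathcal U^{\nu}\times\mathcal W_{\nu})(S)$ with $u\cdot w\in\overline{\mathrm{Gr}}^{\lambda}(S)$ and act by $u^{-1}\in G[z](S)$, concluding $w\in\mathcal W_{\nu}(S)\cap\overline{\mathrm{Gr}}^{\lambda}(S)$. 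Since $\nu$ is dominant in our setting, this intersection is precisely the classical transversal slice $\overline{\mathcal W}^{\lambda}_{\nu}$, so $w\in\overline{\mathcal W}^{\lambda}_{\nu}(S)$. Both sides being reduced closed subschemes of $\mathcal U^{\nu}\times\mathcal W_{\nu}$ of finite type over $\mathbb{C}$, this set-theoretic equality on $S$-points upgrades to an equality of reduced schemes.

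The concluding assertion is then automatic: restricting the open immersion $m_r$ to the preimage of the closed subscheme $\overline{\mathrm{Gr}}^{\lambda}\hookrightarrow\mathrm{Gr}$ yields an open immersion $\mathcal U^{\nu}\times\overline{\mathcal W}^{\lambda}_{\nu}\hookrightarrow\overline{\mathrm{Gr}}^{\lambda}$, landing in the open locus $m_r(\mathcal U^{\nu}\times\mathcal W_{\nu})\cap\overline{\mathrm{Gr}}^{\lambda}$.

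I do not anticipate a serious obstacle: the argument hinges entirely on the $G[z]$-equivariance of $\overline{\mathrm{Gr}}^{\lambda}$ together with the fact that $\mathcal U^{\nu}$ is contained in $G[z]$. The one point requiring care is the identification $\mathcal W_{\nu}\cap\overline{\mathrm{Gr}}^{\lambda}=\overline{\mathcal W}^{\lambda}_{\nu}$ on the level of reduced schemes, which uses precisely the dominance of $\nu$ in the ``monopole bubbling'' decomposition so that the generalized slice reduces to its classical incarnation inside $\mathrm{Gr}$.
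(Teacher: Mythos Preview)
Your proof is correct and follows essentially the same approach as the paper: both arguments use that $\mathcal U^{\nu}\subset G[z]$ and that $\overline{\mathrm{Gr}}^{\lambda}$ is stable under the left $G[z]$-action, then check the two inclusions on $S$-points by multiplying by $u$ or $u^{-1}$. Your write-up is slightly more careful in spelling out the identification $\mathcal W_{\nu}\cap\overline{\mathrm{Gr}}^{\lambda}=\overline{\mathcal W}^{\lambda}_{\nu}$ and the passage from $S$-point equality to equality of reduced schemes, but the substance is identical.
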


\begin{proof}
Let $S$ be a $\mathbb{C}$-scheme, given an $S$-point $x$ in $(\mathcal U^{\nu}\times \mathcal W_{\nu})\cap \overline{\mathrm {Gr}}^{\lambda}$, we can write $x=g\cdot h$ for a unique pair $(g,h)\in \mathcal U^{\nu}(S)\times \mathcal W_{\nu}(S)$. Since $\mathcal U^{\nu}\subset G[z]$ and $\overline{\mathrm {Gr}}^{\lambda}$ is stable under the action of $G[z]$, we see that $h=g^{-1}x\in \overline{\mathcal W}^{\lambda}_{\nu}(S)$. Conversely, for each pair $(g,h)\in \mathcal U^{\nu}(S)\times \mathcal W_{\nu}(S)$, we have $gh\in (\mathcal U^{\nu}\times \mathcal W_{\nu})\cap \overline{\mathrm {Gr}}^{\lambda}(S)$. We establish an equality for $S$-points, whence the lemma follows.
\end{proof}

Next we lift these results to $G(\!(z^{-1})\!)$. Denote by $\widetilde{\mathcal U}^{\nu}$ the ind-scheme $\mathcal U^{\nu}z^{\nu}G[z]$ which is open in $G[z]z^{\mu}G[z]$, here we also identify $\mathcal U^{\nu}$ with subgroup of $ z^{\nu} G_1[\![z^{-1}]\!]z^{-\nu}$. Recall that $G[z]z^{\lambda}G[z]$ (resp. $\overline{G[z]z^{\lambda}G[z]}$) is denoted by $\mathcal {X}^{\lambda}$ (resp. $\overline{\mathcal {X}}^{\lambda}$) in \cite{muthiah2019symplectic}. Then we have open immersion $$m_r:\widetilde{\mathcal U}^{\nu}\times \overline{\mathcal W}^{\lambda}_{\nu}\hookrightarrow \overline{\mathcal {X}}^{\lambda},\; (g_1z^{\nu}g_2,h)\mapsto g_1hz^{\nu}g_2$$To state the next result, let's slightly generalize $\mathcal W_{\mu}$ and define $$\prescript{\lambda_1}{\lambda_2}{\mathcal{X}}_{\mu}=z^{\lambda_1}U_1[\![z^{-1}]\!]z^{-\lambda_1}\cdot z^{\mu} \cdot z^{-\lambda_2}B^{-}_1[\![z^{-1}]\!]z^{\lambda_2},$$ for dominant coweights $\lambda_1,\lambda_2$ and a general coweight $\mu$. Note that $\prescript{\lambda_1}{\lambda_2}{\mathcal{X}}_{\mu}$ is a scheme and in fact it's a closed subscheme of the ind-scheme ${\mathcal{X}}_{\mu}$ defined in \cite{muthiah2019symplectic}. The same argument in \textit{loc. cit.} shows that

\begin{lemma}\label{Lemma: Isomorphism}
The scheme $\prescript{\lambda_1}{\lambda_2}{\mathcal{X}}_{\mu}$ is isomorphic to $\mathcal U^{\lambda_1}\times \mathcal W_{\mu}\times \mathcal U^{\lambda_2^*}$. The scheme $\prescript{\lambda_1}{\lambda_2}{\mathcal{X}}_{\mu}\cap \overline{\mathcal X}^{\lambda}$ is isomorphic to $\mathcal U^{\lambda_1}\times \overline{\mathcal W}^{\lambda}_{\mu}\times \mathcal U^{\lambda_2^*}$.
\end{lemma}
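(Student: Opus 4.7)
My plan is to construct the isomorphism directly by a multiplication map, adapting the argument of Muthiah--Weekes in \cite{muthiah2019symplectic}. The first task is to establish a factorization of the two outer factors. I would prove that
$$z^{\lambda_1}U_1[\![z^{-1}]\!]z^{-\lambda_1} = \mathcal U^{\lambda_1}\cdot U_1[\![z^{-1}]\!]$$
as schemes, with multiplication an isomorphism. This rests on two observations. First, for dominant $\lambda_1$ one checks by examining each positive root subgroup that $U_1[\![z^{-1}]\!]\subset z^{\lambda_1}U_1[\![z^{-1}]\!]z^{-\lambda_1}$: conjugation by $z^{\lambda_1}$ on $x_\alpha(f(z^{-1}))$ with $\alpha>0$ only enlarges the space of allowed coefficients. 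Second, the intersection with $G[z]$ equals $\mathcal U^{\lambda_1}$, providing the polynomial part. Uniqueness of the splitting is handled coordinate-wise, with the non-abelian mixing among root subgroups dealt with by a normal-ordering argument. Symmetrically, after transporting by $w_0$ to convert $U^-$ into $U$ and $\lambda_2$ into $\lambda_2^* = -w_0\lambda_2$, one obtains
$$z^{-\lambda_2}B_1^-[\![z^{-1}]\!]z^{\lambda_2} = T_1[\![z^{-1}]\!]\cdot U_1^-[\![z^{-1}]\!]\cdot w_0\mathcal U^{\lambda_2^*}w_0^{-1}.$$

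Next, I would assemble these with the factorization $\mathcal W_{\mu} = B_1[\![z^{-1}]\!]z^{\mu}B_1^-[\![z^{-1}]\!]$: the middle factor $U_1[\![z^{-1}]\!]$ sits immediately to the left of $z^{\mu}$ and, together with the $T_1[\![z^{-1}]\!]$ that appears when commuting past $z^{\mu}$, produces a $B_1[\![z^{-1}]\!]$-coordinate, while $U_1^-[\![z^{-1}]\!]$ supplies the $B_1^-[\![z^{-1}]\!]$-coordinate on the right. This assembles into a multiplication map
$$m:\mathcal U^{\lambda_1}\times \mathcal W_{\mu}\times \mathcal U^{\lambda_2^*}\longrightarrow \prescript{\lambda_1}{\lambda_2}{\mathcal X}_{\mu},$$
surjective by construction; injectivity, hence the scheme-theoretic isomorphism, follows from the uniqueness at each factorization step.

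For the second claim, I would observe that $\mathcal U^{\lambda_1}$ lies in $G[z]$ by definition, and likewise the $w_0$-twisted image of $\mathcal U^{\lambda_2^*}$ lies in $G[z]$. Since $\overline{\mathcal X}^{\lambda} = \overline{G[z]z^{\lambda}G[z]}$ is stable under the $G[z]\times G[z]$ action by left and right multiplication, the condition $m(u_1,w,u_2)\in \overline{\mathcal X}^{\lambda}$ is equivalent to $w\in \overline{\mathcal X}^{\lambda}$, and therefore to $w\in \mathcal W_{\mu}\cap \overline{\mathcal X}^{\lambda} = \overline{\mathcal W}^{\lambda}_{\mu}$. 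Restricting $m$ accordingly gives the second isomorphism.

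The hard part will be the factorization step at the very beginning: splitting $z^{\lambda}U_1[\![z^{-1}]\!]z^{-\lambda}$ into polynomial and formal-series components is nontrivial because multiplication in the non-abelian unipotent loop group mixes these pieces through commutators. The split must be carried out by a height filtration on positive roots, unwinding the commutator relations level by level. This is precisely what is done in \cite{muthiah2019symplectic}, and I would expect to adapt that argument essentially verbatim.
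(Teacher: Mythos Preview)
Your proposal is correct and coincides with the paper's approach: the paper gives no self-contained proof but simply states that ``the same argument in \textit{loc.\ cit.}'' (i.e.\ \cite{muthiah2019symplectic}) establishes the lemma, and the multiplication-map argument you sketch---factorizing each conjugated unipotent loop group into its polynomial part $\mathcal U^{\lambda_i}$ times its formal-series part via a height-filtration, then using $G[z]\times G[z]$-stability of $\overline{\mathcal X}^{\lambda}$ for the second assertion---is precisely that argument. One cosmetic point: the $w_0$-conjugation you insert on the right-hand factor is unnecessary, since the paper's Remark following the lemma already identifies $\mathcal U^{\lambda_2^*}$ directly with $z^{-\lambda_2}U^-_1[\![z^{-1}]\!]z^{\lambda_2}\cap G[z]$.
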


\begin{remark}
Here we identify $z^{\lambda_1}U_1[\![z^{-1}]\!]z^{-\lambda_1}\cap G[z]$ (resp. $z^{-\lambda_2}U^{-}_1[\![z^{-1}]\!]z^{\lambda_2}\cap G[z]$) with $\mathcal U^{\lambda_1}$ (resp. $\mathcal U^{\lambda_2^*}$).
\end{remark}

\begin{lemma}\label{Lemma: Invariance}
For every dominant coweight $\nu$, the left multiplication of $ z^{\nu} G_1[\![z^{-1}]\!]z^{-\nu}$ stabilizes $\prescript{\nu}{\lambda}{\mathcal{X}}_{\mu}$ for sufficiently dominant $\lambda$.
\end{lemma}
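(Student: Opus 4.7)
The strategy is to reduce the invariance, after stripping the outer conjugations by $z^{\nu}$ and $z^{\lambda}$, to a computation inside $G_1[\![z^{-1}]\!]$ involving an Iwasawa-type decomposition together with the behaviour of conjugation by an antidominant cocharacter.

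First I unwind the definition. Rewrite
\[
\prescript{\nu}{\lambda}{\mathcal X}_{\mu}\;=\;z^{\nu}\cdot U_1[\![z^{-1}]\!]\cdot z^{\tau}\cdot B^{-}_1[\![z^{-1}]\!]\cdot z^{\lambda}, \qquad \tau:=\mu-\nu-\lambda.
\]
Since $U_1[\![z^{-1}]\!]\subseteq G_1[\![z^{-1}]\!]$, stability of this set under left multiplication by $z^{\nu} G_1[\![z^{-1}]\!] z^{-\nu}$ is equivalent to the inclusion
\[
G_1[\![z^{-1}]\!]\cdot z^{\tau}\cdot B^{-}_1[\![z^{-1}]\!] \;\subseteq\; U_1[\![z^{-1}]\!]\cdot z^{\tau}\cdot B^{-}_1[\![z^{-1}]\!].
\]
Choose $\lambda$ sufficiently dominant that $\alpha(\lambda)\ge \alpha(\mu-\nu)$ for every positive root $\alpha$, so that $\tau$ is antidominant. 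The remainder of the proof establishes the displayed inclusion for such $\tau$.

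Two ingredients are used. First, the Iwasawa-type decomposition
\[
G_1[\![z^{-1}]\!]\;=\;U_1[\![z^{-1}]\!]\cdot B^{-}_1[\![z^{-1}]\!]
\]
with unique factorization. I prove this by induction on the truncation level: let $G_n$ denote the $n$-th congruence subgroup, so each graded piece $G_n/G_{n+1}\cong \mathfrak g$ splits compatibly as $\mathfrak u\oplus \mathfrak b^{-}$. The step from level $n$ to level $n+1$ is clean because $[G_1, G_n]\subseteq G_{n+1}$, so the two candidate factors commute on the next layer; uniqueness reduces to $U\cap B^{-}=\{1\}$. Second, for antidominant $\tau$, conjugation by $z^{-\tau}$ stabilises $B^{-}_1[\![z^{-1}]\!]$: it acts trivially on $T_1[\![z^{-1}]\!]$, and for each positive root $\alpha$ the identity $z^{-\tau} u_{-\alpha}(f) z^{\tau} = u_{-\alpha}(z^{\alpha(\tau)} f)$ only increases the $z^{-1}$-adic order of the parameter since $\alpha(\tau)\le 0$.

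Combining these, for any $g\in G_1[\![z^{-1}]\!]$ factor $g=u\cdot b$ with $u\in U_1[\![z^{-1}]\!]$ and $b\in B^{-}_1[\![z^{-1}]\!]$; then
\[
g\cdot z^{\tau}\;=\;u\cdot z^{\tau}\cdot (z^{-\tau} b z^{\tau})\;\in\; U_1[\![z^{-1}]\!]\cdot z^{\tau}\cdot B^{-}_1[\![z^{-1}]\!].
\]
Right-multiplying by the group $B^{-}_1[\![z^{-1}]\!]$ yields the desired inclusion, and substituting this back into the rewriting of $\prescript{\nu}{\lambda}{\mathcal X}_{\mu}$ above proves the lemma. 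The main technical point is the Iwasawa decomposition for the pro-algebraic group $G_1[\![z^{-1}]\!]$; everything else is routine bookkeeping with cocharacter conjugation on root subgroups.
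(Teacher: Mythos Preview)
Your argument is correct and is essentially the paper's own proof: both reduce to the Iwasawa decomposition $G_1[\![z^{-1}]\!]=U_1[\![z^{-1}]\!]\cdot B^{-}_1[\![z^{-1}]\!]$ and the observation that conjugation by a suitable cocharacter sends the $B^{-}_1$-factor into the target $B^{-}_1$-group. The only cosmetic difference is that the paper first reduces to $\nu=0$ and moves $z^{\mu}$ past $U^{-}_1$, whereas you absorb everything into the single antidominant $\tau=\mu-\nu-\lambda$ and conjugate $B^{-}_1$ by $z^{-\tau}$; these are the same computation.
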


\begin{proof}
By a conjugation of $z^{-\nu}$, it's enough to prove the lemma for $\nu=0$. Then we have 
\begin{align*}
G_1[\![z^{-1}]\!]\prescript{0}{\lambda}{\mathcal{X}}_{\mu}&=G_1[\![z^{-1}]\!]z^{\mu}\cdot z^{-\lambda}B^{-}_1[\![z^{-1}]\!]z^{\lambda}\\
&=U_1[\![z^{-1}]\!]z^{\mu}\cdot T_1[\![z^{-1}]\!] \cdot z^{-\mu}U^{-}_1[\![z^{-1}]\!]z^{\mu}\cdot z^{-\lambda}B^{-}_1[\![z^{-1}]\!]z^{\lambda}
\end{align*}
Then for sufficiently dominant $\lambda$, $z^{-\mu}U^{-}_1[\![z^{-1}]\!]z^{\mu}$ is a subgroup of $z^{-\lambda}B^{-}_1[\![z^{-1}]\!]z^{\lambda}$, whence the lemma follows.
\end{proof}

\begin{lemma}
Identify $\widetilde{\mathcal U}^{\nu}\times \overline{\mathcal W}^{\lambda}_{\nu}$ with the image of $m_r$ in $\overline{\mathcal {X}}^{\lambda}$, then for sufficiently dominant $\lambda'$, we have $$m_r\left( (\prescript{\nu}{\lambda'}{\mathcal{X}}_{\mu}\cap \widetilde{\mathcal U}^{\nu})\times \overline{\mathcal W}^{\lambda}_{\nu}\right)\subset \prescript{\nu}{\lambda'}{\mathcal{X}}_{\mu}\cap \overline{\mathcal {X}}^{\lambda}.$$
\end{lemma}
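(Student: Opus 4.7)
The plan is to recognize that $m_r$ on $\widetilde{\mathcal U}^{\nu}\times \overline{\mathcal W}^{\lambda}_{\nu}$ is disguised left multiplication, and then to invoke Lemma \ref{Lemma: Invariance}. Concretely, for $x = g_1 z^{\nu} g_2 \in \widetilde{\mathcal U}^{\nu}$ with $g_1 \in \mathcal U^{\nu}$ and $g_2 \in G[z]$, and for $h \in \overline{\mathcal W}^{\lambda}_{\nu}$, the defining formula rewrites as
$$m_r(x, h) = g_1 h z^{\nu} g_2 = (g_1 h g_1^{-1}) \cdot x,$$
so the question reduces to showing that the element $g_1 h g_1^{-1}$ preserves both $\prescript{\nu}{\lambda'}{\mathcal X}_{\mu}$ and $\overline{\mathcal X}^{\lambda}$ under left multiplication.

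For the $\prescript{\nu}{\lambda'}{\mathcal X}_{\mu}$-containment, I would argue as follows. Under the identifications of $\mathcal U^{\nu}$ and $\mathcal W_{\nu}$ with subgroups of $z^{\nu} G_1[\![z^{-1}]\!] z^{-\nu}$ recalled before Lemma \ref{Lemma: Isomorphism}, both $g_1$ and $h$ lie in this subgroup, hence so does their conjugate $g_1 h g_1^{-1}$. Lemma \ref{Lemma: Invariance} then guarantees, for $\lambda'$ sufficiently dominant (a choice depending only on $\nu$ and $\mu$), that left multiplication by $z^{\nu} G_1[\![z^{-1}]\!] z^{-\nu}$ stabilizes $\prescript{\nu}{\lambda'}{\mathcal X}_{\mu}$, and hence $m_r(x,h) = (g_1 h g_1^{-1}) x \in \prescript{\nu}{\lambda'}{\mathcal X}_{\mu}$ whenever $x$ is.

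For the $\overline{\mathcal X}^{\lambda}$-containment, I would use the realization of $\overline{\mathcal W}^{\lambda}_{\nu}$ inside $\overline{\mathrm{Gr}}^{\lambda}$ via $h \mapsto h z^{\nu} G[z]$, which directly gives $h z^{\nu} \in \overline{\mathcal X}^{\lambda}$. Since $\overline{\mathcal X}^{\lambda}$ is the closure of the $G[z]$-biinvariant orbit $G[z] z^{\lambda} G[z]$, it is itself $G[z]$-biinvariant, so $g_1 \cdot h z^{\nu} \cdot g_2 \in \overline{\mathcal X}^{\lambda}$. The only substantive input is Lemma \ref{Lemma: Invariance}; apart from this, both containments amount to bookkeeping with the identifications, so I do not anticipate a serious obstacle.
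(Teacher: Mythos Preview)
Your proposal is correct and follows essentially the same route as the paper: rewrite $m_r(g_1 z^{\nu} g_2, h)$ as $(g_1 h g_1^{-1}) \cdot (g_1 z^{\nu} g_2)$, observe that $g_1 h g_1^{-1} \in z^{\nu} G_1[\![z^{-1}]\!] z^{-\nu}$, and invoke Lemma \ref{Lemma: Invariance}. The only difference is that you spell out the $\overline{\mathcal X}^{\lambda}$-containment via $G[z]$-biinvariance, whereas the paper takes it as already built into the hypothesis that $m_r$ identifies $\widetilde{\mathcal U}^{\nu}\times \overline{\mathcal W}^{\lambda}_{\nu}$ with its image in $\overline{\mathcal X}^{\lambda}$.
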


\begin{proof}
Let's take $g_1z^{\nu}g_2\in \prescript{\nu}{\lambda'}{\mathcal{X}}_{\mu}\cap \widetilde{\mathcal U}^{\nu}(S)$ for an $\mathbb C$-scheme $S$, and for arbitrary $h\in \overline{\mathcal W}^{\lambda}_{\nu}(S)$, we have $$m_r(g_1z^{\nu}g_2,h)=g_1hz^{\nu}g_2=g_1hg_1^{-1}\cdot g_1z^{\nu}g_2$$and since both $g_1$ and $h$ are elements in $z^{\nu} G_1[\![z^{-1}]\!]z^{-\nu}(S)$, we see that $m_r(g_1z^{\nu}g_2,h)$ is in the $z^{\nu} G_1[\![z^{-1}]\!]z^{-\nu}$ orbit of $g_1hz^{\nu}g_2$, hence it lies inside $\prescript{\nu}{\lambda'}{\mathcal{X}}_{\mu}(S)$ for sufficiently dominant $\lambda'$ by Lemma \ref{Lemma: Invariance}.
\end{proof}

Next, we embed $(\mW_{\mu}\cap \widetilde{\mathcal{U}}^{\nu})\times \overline{\mathcal W}^{\lambda}_{\nu}$ into $(\prescript{\nu}{\lambda'}{\mathcal{X}}_{\mu}\cap \widetilde{\mathcal U}^{\nu})\times \overline{\mathcal W}^{\lambda}_{\nu}$ using Lemma \ref{Lemma: Isomorphism},  compose it with $m_r$ so that the image is in $\prescript{\nu}{\lambda'}{\mathcal{X}}_{\mu}\cap \overline{\mathcal {X}}^{\lambda}$, and then compose with the projection to $\bW^{\lambda}_{\mu}$ using Lemma \ref{Lemma: Isomorphism} again. Denote this map by $\psi$, and we have the following

\begin{lemma}\label{Lemma: Psi is mono}
$\psi:(\mW_{\mu}\cap \widetilde{\mathcal{U}}^{\nu})\times \overline{\mathcal W}^{\lambda}_{\nu}\to \bW^{\lambda}_{\mu}$ is a monomorphism of schemes.
\end{lemma}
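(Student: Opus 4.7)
The plan is to verify the monomorphism property functorially, i.e., to check injectivity on $S$-points. Let $(w_1,h_1),(w_2,h_2)\in (\mW_\mu\cap \widetilde{\mathcal U}^\nu)(S)\times \overline{\mathcal W}^\lambda_\nu(S)$ satisfy $\psi(w_1,h_1)=\psi(w_2,h_2)=b$. Using the unique factorization $\widetilde{\mathcal U}^\nu=\mathcal U^\nu\cdot z^\nu G[z]$ (injective because $G_1[\![z^{-1}]\!]\cap G[z]=\{1\}$), write $w_i=u_iz^\nu g_i$, so that $m_r(w_i,h_i)=u_ih_iz^\nu g_i$; by Lemma \ref{Lemma: Isomorphism} this element decomposes uniquely as $a_i\cdot b\cdot c_i$ in $\mathcal U^\nu\times \overline{\mathcal W}^\lambda_\mu\times \mathcal U^{\lambda'^*}$ with the common middle factor $b$ by hypothesis.

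The key intermediate step is to show $c_1=c_2=1$. Rewriting $m_r(w_i,h_i)=(u_ih_iu_i^{-1})\cdot w_i$ with $u_ih_iu_i^{-1}\in z^\nu G_1[\![z^{-1}]\!]z^{-\nu}$, and noting that the embedded $w_i\in \mW_\mu\subset \prescript{\nu}{\lambda'}{\mathcal X}_\mu$ has the canonical decomposition $(1,w_i,1)$, the conclusion $c_i=1$ reduces to showing that left multiplication by $z^\nu G_1[\![z^{-1}]\!]z^{-\nu}$ on $\prescript{\nu}{\lambda'}{\mathcal X}_\mu$ preserves the $\mathcal U^{\lambda'^*}$-factor. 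I would verify this by tracing through the explicit form of the isomorphism in Lemma \ref{Lemma: Isomorphism}, using the Iwasawa decomposition $G_1[\![z^{-1}]\!]=U_1[\![z^{-1}]\!]T_1[\![z^{-1}]\!]U_1^-[\![z^{-1}]\!]$ of the pro-unipotent group together with the sufficient dominance of $\lambda'$ supplied by Lemma \ref{Lemma: Invariance} (which ensures that the ``rational'' pieces produced by the pro-unipotent Iwasawa decomposition are absorbed into the first two factors rather than spilling into the third).

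Once $c_i=1$, the identity $u_ih_iz^\nu g_i=a_ib$ rearranges to $\tilde u_ih_iz^\nu g_i=b$ with $\tilde u_i:=a_i^{-1}u_i\in \mathcal U^\nu$. The multiplication map $\mathcal U^\nu\times \mW_\nu\times z^\nu G[z]\to G(\!(z^{-1})\!)$ is injective: from $uhz^\nu g=u'h'z^\nu g'$ one deduces $g(g')^{-1}\in G_1[\![z^{-1}]\!]\cap G[z]=\{1\}$ (since both $uh$ and $u'h'$ lie in $z^\nu G_1[\![z^{-1}]\!]z^{-\nu}$), after which the open immersion $m_r:\mathcal U^\nu\times \mW_\nu\to z^\nu G_1[\![z^{-1}]\!]z^{-\nu}$ forces $(u,h)=(u',h')$. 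Applying this to $\tilde u_1h_1z^\nu g_1=\tilde u_2h_2z^\nu g_2$ yields $\tilde u_1=\tilde u_2=:\tilde u$, $h_1=h_2$, and $g_1=g_2=:g$. Setting $\tilde w:=\tilde uz^\nu g$ so that $w_i=a_i\tilde w$, Lemma \ref{Lemma: Invariance} ensures $\tilde w=a_i^{-1}w_i\in \prescript{\nu}{\lambda'}{\mathcal X}_\mu$, and its decomposition $\tilde w=\tilde a\cdot \tilde b\cdot \tilde c$ via Lemma \ref{Lemma: Isomorphism} gives $w_i=(a_i\tilde a)\cdot \tilde b\cdot \tilde c$. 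Comparing with the canonical decomposition $(1,w_i,1)$ of $w_i\in \mW_\mu$ forces $a_i=\tilde a^{-1}$ and $w_i=\tilde b$, hence $a_1=a_2$ and $w_1=w_2$, completing the proof. The principal technical hurdle is justifying the preservation of the third factor under left multiplication; this requires delicate manipulation of the pro-unipotent Iwasawa decomposition together with the dominance condition on $\lambda'$, and is the only part of the argument that genuinely uses the ``truncation'' information encoded in $\lambda'$.
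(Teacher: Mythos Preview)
Your overall strategy is reasonable, but the ``key intermediate step'' --- showing $c_i=1$, equivalently that left multiplication by $z^\nu G_1[\![z^{-1}]\!]z^{-\nu}$ on $\prescript{\nu}{\lambda'}{\mathcal X}_\mu$ preserves the $\mathcal U^{\lambda'^*}$-factor --- is false in general, and the Iwasawa manipulation you sketch does not prove it. Unwinding, your claim is equivalent to $z^\nu G_1[\![z^{-1}]\!]z^{-\nu}\cdot \mW_\mu\subset \mathcal U^\nu\cdot \mW_\mu$. Write $k\in z^\nu G_1 z^{-\nu}$ and $w=uz^\mu b^-\in \mW_\mu$; decomposing $ku=(z^\nu u'z^{-\nu})\,t'\,(z^\nu v z^{-\nu})$ via Iwasawa in $z^\nu G_1 z^{-\nu}$ gives
\[
kw=a\cdot u''z^\mu\cdot t'\bigl(z^{\nu-\mu}vz^{\mu-\nu}\bigr)b^-,\qquad a\in\mathcal U^\nu,\; u''\in U_1,\; v\in U_1^-,
\]
and for the bracketed factor to lie in $B_1^-$ one needs $z^{\nu-\mu}U_1^-z^{\mu-\nu}\subset U_1^-$, i.e.\ $\nu-\mu$ dominant. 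This fails already for $G=\SL_3$, $\nu=0$, $\mu=-\alpha_1$: taking $k=\exp(z^{-1}f_{\alpha_2})$ and $w=z^{-\alpha_1}$ gives $kw=z^{-\alpha_1}\exp(f_{\alpha_2})$, whose $\mathcal U^{\lambda'^*}$-factor is $\exp(f_{\alpha_2})\ne 1$ for any sufficiently dominant $\lambda'$. The dominance of $\lambda'$ from Lemma~\ref{Lemma: Invariance} guarantees the product lands in $\prescript{\nu}{\lambda'}{\mathcal X}_\mu$, but it does not force the polynomial piece $z^{\nu-\mu}vz^{\mu-\nu}$ to be absorbed into $\mW_\mu$; it goes precisely into the third factor.

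The paper's proof avoids this trap by never asserting $c_i=1$. Instead it decomposes $g_1hg_1^{-1}=st$ with $s\in\mathcal U^\nu$, $t\in\mW_\nu$ (via $m_r$ on $z^\nu G_1 z^{-\nu}$), so that $\psi(g_1z^\nu g_2,h)\equiv t\,g_1z^\nu g_2 \pmod{\mathcal U^{\lambda'^*}}$. Passing to $\Gr$ kills both $g_2$ and the unknown $\mathcal U^{\lambda'^*}$-tail simultaneously, and then the open immersion $m_l:\mW_\nu\times\mathcal U^\nu\hookrightarrow\Gr$ recovers $t$ and $g_1$. Only after $t,g_1,h$ are pinned down does one return to the equation in $\prescript{\nu}{\lambda'}{\mathcal X}_\mu$ to handle $g_2$ --- and at that final stage the uniqueness in Lemma~\ref{Lemma: Isomorphism} suffices. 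The essential idea you are missing is this switch between the $m_r$- and $m_l$-factorizations of $z^\nu G_1 z^{-\nu}$ combined with the passage to $\Gr$, which sidesteps any need to control the $\mathcal U^{\lambda'^*}$-factor directly.
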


\begin{proof}
We need to show that $\psi$ is injective on $S$-points for all $\mathbb C$-scheme $S$. Let's take $g_1z^{\nu}g_2\in \mW_{\mu}\cap \widetilde{\mathcal U}^{\nu}(S)$ and $h\in \overline{\mathcal W}^{\lambda}_{\nu}(S)$, where we use the decomposition $\widetilde{\mathcal U}^{\nu}={\mathcal U}^{\nu}z^{\nu}G[z]$, in other word $g_1\in \mathcal U^{\nu}(S)$ and $g_2\in G[z](S)$, then $$\psi(g_1z^{\nu}g_2,h)=g_1hg_1^{-1}\cdot g_1z^{\nu}g_2 \mod \mathcal U^{\nu}\times \mathcal U^{\lambda'^*}$$ where $\mathcal U^{\nu}$ acts from the left and $\mathcal U^{\lambda'^*}$ acts from the right. Now $g_1hg_1^{-1}\in z^{\nu} G_1[\![z^{-1}]\!]z^{-\nu}(S)=\mathcal U^{\nu}(S)\times \mathcal W_{\nu}(S)$, so there exists a unique pair $(s,t)\in \mathcal U^{\nu}(S)\times \mathcal W_{\nu}(S)$ such that $g_1hg_1^{-1}=st$. As a result, we have $$\psi(g_1z^{\nu}g_2,h)=tg_1z^{\nu}g_2 \mod \mathcal U^{\lambda'^*}.$$Suppose that there is another pair $(g_1'z^{\nu}g_2',h')\in \mW_{\mu}\cap \widetilde{\mathcal U}^{\nu}(S) \times \overline{\mathcal W}^{\lambda}_{\nu}(S)$ such that $\psi(g_1z^{\nu}g_2,h)=\psi(g_1'z^{\nu}g_2',h')$, then from above computation we see that $tg_1z^{\nu}g_2=t'g_1'z^{\nu}g_2' \mod \mathcal U^{\lambda'^*}$, thus $$tg_1z^{\nu}=t'g_1'z^{\nu} \mod G[z]$$Since $m_l:\mW_{\nu}\times \mathcal U^{\nu}\to \Gr$ is an open immersion, we conclude that $t=t'$ and $g_1=g_1'$.

From these two equations and the definition of $(s,t)$, we arrive at $g_1hg_1^{-1}=\tilde{s}g_1h'g_1^{-1}$, where $\tilde{s}=ss'^{-1}\in \mathcal U^{\nu}(S)$, and this implies that $$hh'^{-1}=g_1^{-1}\tilde{s}g_1\in \mathcal U^{\nu}(S),$$ but by definition $hh'^{-1}\in \mW_{\nu}(S)$, hence $h=h'$.

Finally, plug three equations we already have into the condition $\psi(g_1z^{\nu}g_2,h)=\psi(g_1'z^{\nu}g_2',h')$, and we get $g_1hz^{\nu}g_2=g_1hz^{\nu}g_2' \mod \mathcal U^{\lambda'^*}$, equivalently $g_2'=g_2y$ where $y\in \mathcal U^{\lambda'^*}(S)$. Then both $g_1z^{\nu}g_2$ and $g_1z^{\nu}g_2y$ are elements in $\mW_{\nu}(S)$, and this implies that $$y\in \mW_{\nu}(S)\cap \mathcal U^{\lambda'^*}(S)=1$$This concludes the proof of $(g_1z^{\nu}g_2,h)=(g_1'z^{\nu}g_2',h')$.
\end{proof}

\begin{proof}[Proof of Theorem \ref{Thm: Transversal Slice}]
By Lemma \ref{Lemma: Psi is mono}, $\psi:(\mW_{\mu}\cap \widetilde{\mathcal{U}}^{\nu})\times \overline{\mathcal W}^{\lambda}_{\nu}\to \bW^{\lambda}_{\mu}$ is a monomorphism between normal varieties of the same dimension, in particular $\psi$ induces isomorphism between generic points, thus $\psi$ is an open immersion by Zariski's main theorem \cite[\href{https://stacks.math.columbia.edu/tag/05K0}{Tag 05K0}]{stacks-project}. Moreover it induces identity map on $(\mW_{\mu}\cap \widetilde{\mathcal{U}}^{\nu})\times 1$, so $\overline{\mathcal W}^{\lambda}_{\nu}$ is a transversal slice of $\mW_{\mu}\cap \widetilde{\mathcal{U}}^{\nu}$ in $\bW^{\lambda}_{\mu}$. Since Weyl conjugations of $ \widetilde{\mathcal U}^{\nu}$ is a covering of $\mathcal{X}^{\nu}$, and same method can be applied to $\mW_{\mu}\cap w\widetilde{\mathcal U}^{\nu}w^{-1}$ for Weyl group element $w$, and see that there is an open immersion $\psi_w:(\mW_{\mu}\cap w\widetilde{\mathcal{U}}^{\nu}w^{-1})\times \overline{\mathcal W}^{\lambda}_{\nu}\to \bW^{\lambda}_{\mu}$ which induces identity map on $(\mW_{\mu}\cap w\widetilde{\mathcal{U}}^{\nu}w^{-1})\times 1$.
\end{proof}

\section{Symplectic Singularities}\label{Sec: Symplectic Singularities}

In this section we prove Conjecture \ref{Conj: Symplectic Singularities} for $\overline {\mathcal W}^{\lambda}_{\mu}$.
Note that our main result (Theorem \ref{Thm: Symplectic Singularities}) can be deduced using Coulomb branch techniques, see \cite{weekes2020quiver} for ADE-type and \cite{nakajima2019coulomb} for extension to non-simply laced cases. Here we provide an elementary approach.

\begin{corollary}\label{Cor: Rational Gorenstein}
$\bW^{{\lambda}}_{\mu}$ has rational Gorenstein singularities.
\end{corollary}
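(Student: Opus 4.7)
The plan is to reduce Corollary \ref{Cor: Rational Gorenstein} to the already-known dominant case by slicing. For dominant $\nu$ the classical slice $\bW^{\lambda}_{\nu}$ has symplectic singularities by \cite{kamnitzer2014yangians}, hence in particular is rational Gorenstein; I propagate this to arbitrary (possibly non-dominant) $\mu$ using Theorem \ref{Thm: Transversal Slice}.

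Concretely, I would stratify $\bW^{\lambda}_{\mu}=\coprod_{\nu}\mW^{\nu}_{\mu}$, with $\nu$ ranging over dominant coweights satisfying $\mu\le\nu\le\lambda$, and argue locally on each stratum. Fix a point $p\in \mW^{\nu}_{\mu}$. The proof of Theorem \ref{Thm: Transversal Slice} produces, for each Weyl element $w$, an open immersion
\[
\psi_w:(\mW_{\mu}\cap w\widetilde{\mathcal U}^{\nu}w^{-1})\times \bW^{\lambda}_{\nu}\hookrightarrow \bW^{\lambda}_{\mu}
\]
restricting to the identity on the first factor. Since Weyl conjugates of $\widetilde{\mathcal U}^{\nu}$ cover the orbit $\mathcal X^{\nu}$, at least one image of $\psi_w$ contains $p$, exhibiting an open neighborhood of $p$ in $\bW^{\lambda}_{\mu}$ that is isomorphic to the product of a smooth variety (an open subset of $\mW_{\mu}$, smooth by Theorem \ref{Thm: Smoothness}) with $\bW^{\lambda}_{\nu}$.

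Because $\nu$ is dominant, the second factor $\bW^{\lambda}_{\nu}$ has rational Gorenstein singularities. Both rationality and Gorensteinness are preserved under product with a smooth variety and are local in the smooth topology, so the above open neighborhood of $p$ is rational Gorenstein, whence so is $\bW^{\lambda}_{\mu}$ near $p$. Letting $p$ vary over every stratum yields the corollary.

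I do not anticipate a serious obstacle: the argument is essentially a routine localization on top of Theorem \ref{Thm: Transversal Slice}. The only step requiring attention is verifying that the images of the $\psi_w$'s jointly cover every stratum $\mW^{\nu}_{\mu}$, which is the same covering argument already invoked at the end of the proof of Theorem \ref{Thm: Transversal Slice}.
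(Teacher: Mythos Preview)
Your proposal is correct and follows exactly the paper's own route: reduce to the dominant case via Theorem~\ref{Thm: Transversal Slice}, using that the images of the $\psi_w$ cover each stratum and that rational Gorenstein is smooth-local and stable under product with a smooth factor. One small wording fix: the first factor $\mW_{\mu}\cap w\widetilde{\mathcal U}^{\nu}w^{-1}$ is an open subset of $\mW^{\nu}_{\mu}$ (not of the infinite-type $\mW_{\mu}$), and it is the smoothness of $\mW^{\nu}_{\mu}$ from Theorem~\ref{Thm: Smoothness} that you are invoking.
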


\begin{proof}
This is known for dominant $\mu$ \cite{kamnitzer2014yangians}. The general case follows from the dominant case and the transversal slice description of local singularities of $\bW^{{\lambda}}_{\mu}$ (Theorem \ref{Thm: Transversal Slice}).
\end{proof}

%\begin{remark}
%Both rational and Gorenstein properties are local in the smooth topology, so the argument in the proof of Proposition \ref{Prop: Smoothness} can be applied to show that $\bW^{\underline{\lambda}}_{\mu}$ and $\bW^{\underline{\lambda}, \underline{z}}_{\mu}$ has rational and Gorenstein singularities. In effect, 
%\end{remark}

Before stating more results, let's recall a theorem of Namikawa \cite[Theorem 6]{namikawa2000extension}, which is essential in the proof of our main result.

\begin{theorem}[Namikawa]\label{Nam1}
A normal variety $X$ has symplectic singularities if and only if there is a symplectic form $\Omega$ on the smooth locus $X^{\mathrm{reg}}$, and $X$ has rational Gorenstein singularities.
\end{theorem}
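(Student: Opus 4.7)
The plan is to prove the two directions of the equivalence separately.

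For the forward direction, assume $X$ has symplectic singularities. The symplectic form on $X^{\mathrm{reg}}$ is part of the definition, so the content is to deduce rational Gorenstein. Pick a resolution $\pi\colon \tilde X\to X$ together with the extension $\tilde\Omega\in H^0(\tilde X,\Omega^2_{\tilde X})$ of $\pi^*\Omega$. Since $\Omega^{\wedge n}$ is a nowhere-vanishing section of $\omega_{X^{\mathrm{reg}}}$ and $X$ is normal, the reflexive sheaf $\omega_X$ is trivial as a Weil divisor class. The section $\tilde\Omega^{\wedge n}\in H^0(\tilde X,\omega_{\tilde X})$ agrees with $\pi^*(\Omega^{\wedge n})$ on $\pi^{-1}(X^{\mathrm{reg}})$, so its zero divisor is effective and supported on the exceptional locus; hence $X$ has canonical singularities with trivial canonical class. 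In particular $\omega_X$ is a line bundle, so $X$ is Gorenstein, and by Elkik's theorem Gorenstein canonical singularities are automatically rational.

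For the reverse direction, assume $X$ is normal and rational Gorenstein with a symplectic form $\Omega$ on $X^{\mathrm{reg}}$; I must construct the extension $\tilde\Omega$ on some (equivalently any) resolution $\pi\colon\tilde X\to X$. The key input is an extension theorem for reflexive differentials on rational Gorenstein varieties: for every $p$ the natural map $\pi_*\Omega^p_{\tilde X}\to (\Omega^p_X)^{**}$ is an isomorphism. Specialising to $p=2$ identifies $H^0(\tilde X,\Omega^2_{\tilde X})$ with $H^0(X^{\mathrm{reg}},\Omega^2)$, which immediately produces the extension $\tilde\Omega$ of $\pi^*\Omega$.

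The main obstacle is the extension theorem used in the reverse direction; its proof is genuinely cohomological. Namikawa's original approach in \cite{namikawa2000extension} combines Grauert--Riemenschneider vanishing with a Hodge-theoretic analysis on a log resolution, reducing the statement to a local computation near the exceptional divisor. A conceptually cleaner route, available today, is to invoke the extension theorem of Greb--Kebekus--Kovács for reflexive differentials on klt (in particular rational Gorenstein canonical) varieties, which gives the required identification $\pi_*\Omega^p_{\tilde X}\cong (\Omega^p_X)^{**}$ directly.
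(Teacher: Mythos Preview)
The paper does not prove this statement at all: it is quoted verbatim as \cite[Theorem~6]{namikawa2000extension} and used as a black box in the derivation of Theorem~\ref{Thm: Symplectic Singularities}. So there is no ``paper's own proof'' to compare against; your task here would simply be to cite Namikawa, as the paper does.

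That said, your sketch is a faithful outline of how the result is actually established. The forward direction is standard: $\Omega^{\wedge n}$ trivialises $K_X$ as a Weil divisor, the extension $\tilde\Omega^{\wedge n}$ shows the discrepancies are nonnegative, hence $X$ is canonical; Elkik then gives rationality (and hence Cohen--Macaulayness, so that ``Gorenstein'' is meaningful). For the reverse direction you correctly identify the nontrivial input as an extension theorem for reflexive $2$-forms across a resolution of a rational Gorenstein variety. This is precisely what Namikawa proves in \cite{namikawa2000extension} via Hodge theory and Steenbrink-type vanishing; invoking the later Greb--Kebekus--Kov\'acs(--Peternell) extension theorem is a legitimate and now-standard shortcut, though anachronistic relative to Namikawa's paper. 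One small presentational point: in the forward direction you appeal to Elkik to get rationality and then conclude Gorenstein, but strictly speaking ``Gorenstein'' already requires Cohen--Macaulay, which you are deducing from rationality---so the logical order is canonical $\Rightarrow$ rational $\Rightarrow$ Cohen--Macaulay, and then Cohen--Macaulay $+$ $\omega_X$ invertible $\Rightarrow$ Gorenstein.
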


This means that we only need to show that there exists a natural Poisson structure on $\overline{\mathcal{W}}^{\lambda}_{\mu}$ and it's regular on ${\mathcal{W}}^{\lambda}_{\mu}$. Recall that there is an open embedding $\mathring{Z}^{\alpha^*}=\overline{\mathcal{W}}^{0}_{\mu-\lambda}\hookrightarrow \overline{\mathcal{W}}^{\lambda}_{\mu}$, and in fact this is a section on $ \mathring{Z}^{\alpha^*}$ of the natural map $\mathbf{q}:\overline{\mathcal{W}}^{\lambda}_{\mu}\to Z^{\alpha^*}$ \cite{braverman2016coulomb}.

\begin{proposition}\label{Prop: Poisson Extension}
The natural Poisson structure on $\mathring{Z}^{\alpha^*}$ extends to a Poisson structure on $\overline{\mathcal{W}}^{\lambda}_{\mu}$, and it's regular on ${\mathcal{W}}^{\lambda}_{\mu}$.
\end{proposition}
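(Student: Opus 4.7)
The plan is to build the Poisson extension in two stages: first construct the Poisson bivector on the smooth locus $\mW^\lambda_\mu$ by reducing via Theorem \ref{Thm: Transversal Slice} to the dominant-$\mu$ case (which is classical), and then extend across the singular strata using normality and a codimension-two argument.

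For dominant $\mu$ the proposition is essentially \cite[Theorem 2.9]{kamnitzer2014yangians}: $\bW^\lambda_\mu$ is an ordinary affine Grassmannian slice, and the Poisson structure on it restricts on the open $\mathring{Z}^{\alpha^*}$ to the natural one. For general $\mu$, at every $x \in \mW^\nu_\mu$ (with $\nu$ dominant, $\mu \le \nu \le \lambda$), Theorem \ref{Thm: Transversal Slice} supplies an \'etale neighborhood of the form $U \times \bW^\lambda_\nu$ with $U \subset \mW^\nu_\mu$ smooth. Since $\nu$ is dominant, $\bW^\lambda_\nu$ already carries a Poisson structure; together with a symplectic structure on $U$ (chosen so that the product restricts to the given one on $\mathring{Z}^{\alpha^*}$), this defines a local Poisson structure on the neighborhood. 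Gluing on overlaps is forced by the density of $\mathring{Z}^{\alpha^*}$ together with uniqueness of Poisson extensions along a common dense open.

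Next, I extend across the singular locus: the complement $\bW^\lambda_\mu \setminus \mW^\lambda_\mu = \bigsqcup_{\nu < \lambda,\ \nu\text{ dom.}} \mW^\nu_\mu$ has codimension $\dim \bW^\lambda_\nu$ at each stratum (by Theorem \ref{Thm: Transversal Slice}), which is $\ge 2$ because the transversal slice is non-trivial symplectic and hence even-dimensional. Together with normality of $\bW^\lambda_\mu$ (implied by Corollary \ref{Cor: Rational Gorenstein}), this yields $j_* \mathcal{O}_{\mW^\lambda_\mu} = \mathcal{O}_{\bW^\lambda_\mu}$ for the open inclusion $j$, so the Poisson bracket extends uniquely, with the Jacobi and Leibniz identities persisting by density. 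Regularity on $\mW^\lambda_\mu$ is immediate from the local chart at the top stratum $\nu = \lambda$: $\bW^\lambda_\lambda$ is a point, so the chart is just a smooth open and the Poisson structure is the symplectic form there.

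The main obstacle is the first stage, specifically constructing the auxiliary symplectic form on the smooth factor $U$ in each local chart and checking consistency on overlaps. The cleanest route is to exploit the $G_1[\![z^{-1}]\!]$-equivariance of the Poisson structure on the ambient infinite slice $\mW_\mu$ (cf.\ \cite[2(xi)]{braverman2016coulomb}): the local charts inherit a compatible invariant structure pulled back from $\mW_\mu$, and the symplectic form on each $U$ is pinned down uniquely by its restriction to $\mathring{Z}^{\alpha^*} \cap U$.
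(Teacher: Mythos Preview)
There is a genuine gap. The hard step in this proposition is extending the Poisson bivector from the open Zastava $\mathring{Z}^{\alpha^*}$ across the exceptional divisors $\dt{E}_i$, which lie \emph{inside} the smooth top stratum $\mW^{\lambda}_{\mu}$ and have codimension~$1$. Your argument never confronts this. Your codimension-two step only handles the extension from $\mW^{\lambda}_{\mu}$ to $\bW^{\lambda}_{\mu}$; but the passage from $\mathring{Z}^{\alpha^*}$ to $\mW^{\lambda}_{\mu}$ crosses the $\dt{E}_i$, where Hartogs is unavailable. Your transversal-slice charts at lower strata $\nu<\lambda$ are centred on loci of codimension $\ge 2$ and do not cover a generic point of any $\dt{E}_i$; and your chart at $\nu=\lambda$ is just an open $U\subset\mW^{\lambda}_{\mu}$, on which ``choose a symplectic form restricting to the given one on $\mathring{Z}^{\alpha^*}$'' is precisely the statement to be proved. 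The paper deals with this step by the factorization isomorphism (Proposition~\ref{Prop: Factorization}), which identifies a neighbourhood of $\dt{E}_i$ with $\mathring{Z}^{\beta^*}\times\mathcal S_{\langle\lambda,\check\alpha_{i^*}\rangle}$ and thereby reduces the extension over $\dt{E}_i$ to an explicit $A_1$ computation.

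There is a second, independent problem with the lower-stratum charts. You assume that on $U\times\bW^{\lambda}_{\nu}$ the Poisson structure inherited from $\mathring{Z}^{\alpha^*}$ is a \emph{product} of a symplectic form on $U$ and the known Poisson structure on $\bW^{\lambda}_{\nu}$. Nothing in Theorem~\ref{Thm: Transversal Slice} says the open immersion $\psi$ is Poisson, or that it intertwines the given structure with a product; the construction of $\psi$ goes through a nontrivial twist by elements of $z^{\nu}G_1[\![z^{-1}]\!]z^{-\nu}$ and a projection along $\mathcal U^{\nu}\times\mathcal U^{\lambda'^*}$, none of which is visibly Poisson. The appeal to $G_1[\![z^{-1}]\!]$-equivariance of $\mW_{\mu}$ does not supply this: equivariance of a Poisson structure under a group action does not force it to split along an equivariant product decomposition, and in any case you would still need to identify the resulting form on $U$ with the one coming from $\mathring{Z}^{\alpha^*}$. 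Without establishing such compatibility, the gluing-by-density argument has nothing to glue.
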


For simply laced $G$, this is \cite[Proposition 3.18]{braverman2016coulomb}. Here we present a proof for general simple $G$. From now on, we will freely use notations from  \cite{braverman2016coulomb}. Recall that there is an smooth open subvariety $\dt{Z}^{\alpha^*}$ of ${Z}^{\alpha^*}$, such that $\mathbf{q}^{-1}({Z}^{\alpha^*}\setminus\dt{Z}^{\alpha^*})$ has codimension $2$ in $\overline{\mathcal{W}}^{\lambda}_{\mu}$, so it's enough to prove the proposition for $\dt{\mathcal{W}}^{\lambda}_{\mu}=\mathbf{q}^{-1}(\dt{Z}^{\alpha^*})$.

The following is a generalization of \cite[Lemma 2.11, 2.12]{braverman2016coulomb} to non-simply laced case, and its proof is identical to that of \textit{loc. cit.}.
\begin{lemma}
Let $I$ be the set of simple coroots, and let $d_i=1$ for short coroots and $d_i=2$ or $3$ for long coroots, then:
\begin{itemize}
    \item[(1)] $\mathrm{div}\left( F_{\alpha^*}\right)=\sum_{i\in I}d_i\partial_i\dt{Z}^{\alpha^*}$;
    \item[(2)] $\mathrm{div} \left(\mathbf{q}^*F_{\alpha^*}\right)=\sum_{i\in I}d_i\mathbf{q}^{-1}_*\left(\partial_i\dt{Z}^{\alpha^*}\right)+\sum_{i\in I}\langle \lambda,d_i\check{\alpha}_{i^*}\rangle \dt{E}_i$;
    \item[(3)] $\mathbf{p}^* \mathcal L=\mathcal{O}_{\dt{\mathcal {GZ}}^{-\mu}_{\lambda}}\left(\sum_{i\in I}d_i\mathbf{q}^*\left(\partial_i\dt{Z}^{\alpha^*}\right)-\sum_{i\in I}\langle \lambda,d_i\check{\alpha}_{i^*}\rangle \dt{E}_i\right)\cong \mathcal{O}_{\dt{\mathcal {GZ}}^{-\mu}_{\lambda}}\left(-\sum_{i\in I}\langle \lambda,d_i\check{\alpha}_{i^*}\rangle \dt{E}_i\right)$;
    \item[(4)] $\mathrm{div}\left(\mathbf{q}^*\pi_{\alpha^*}^*\mathbb{A}^{\alpha^*}_i\right)=E_i+\mathbf{q}^{-1}_*\left(\pi_{\alpha^*}^*\mathbb{A}^{\alpha^*}_i\right)$.
\end{itemize}
\end{lemma}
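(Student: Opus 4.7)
The plan is to transcribe the proof of \cite[Lemma 2.11, 2.12]{braverman2016coulomb} and insert the coefficients $d_i$ at each point where the normalization of the simple coroot $\check\alpha_i$ matters. The arithmetic input throughout is the identity $\langle \omega_j, \check\alpha_i\rangle = d_i\delta_{ij}$ for the fundamental weights $\omega_j$ of $G^{\vee}$: this is the source of the $d_i$'s and is the only way in which the non-simply-laced case differs from the simply-laced one.

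For part (1), I would compute the divisor of $F_{\alpha^*}$ directly on the smooth locus $\dt{Z}^{\alpha^*}$. At a generic point of $\partial_i\dt{Z}^{\alpha^*}$ the based map underlying the zastava point acquires an infinitesimal defect in the direction of $\check\alpha_i$; since $F_{\alpha^*}$ is assembled from Pl\"ucker coordinates of fundamental representations and pairs against $\check\alpha_i$ via the identity above, its order of vanishing along $\partial_i\dt{Z}^{\alpha^*}$ is exactly $d_i$. For part (2), pull back via $\mathbf{q}:\bW^{\lambda}_{\mu}\to Z^{\alpha^*}$: the strict-transform contribution is $d_i$ by part (1), while the exceptional contribution along $\dt{E}_i$ is computed by describing $\dt{E}_i$ as the Hecke-modification locus of type $\check\alpha_i$ and reading off the order of $\mathbf{q}^*F_{\alpha^*}$, which equals $\langle \lambda, d_i\check\alpha_{i^*}\rangle$.

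Part (3) then follows formally: $\mathbf{q}^*F_{\alpha^*}$ is a section of $\mathbf{p}^*\mathcal{L}$, so its divisor represents $\mathbf{p}^*\mathcal{L}$ in the Picard group, giving the first equality in (3); moreover the divisor $\sum_i d_i\,\mathbf{q}^*(\partial_i \dt{Z}^{\alpha^*})$ is principal (pulled back from the divisor of $F_{\alpha^*}$), so modulo linear equivalence only the exceptional part remains, which yields the second isomorphism. For part (4), the same analysis applied to the coordinate $\mathbb{A}^{\alpha^*}_i$ yields vanishing of order $1$ along the unique exceptional divisor $E_i$ and no contribution from $\partial_j\dt{Z}^{\alpha^*}$, since $\mathbb{A}^{\alpha^*}_i$ is a regular coordinate on $\dt{Z}^{\alpha^*}$ and hence contributes only the strict transform $\mathbf{q}^{-1}_*\bigl(\pi_{\alpha^*}^*\mathbb{A}^{\alpha^*}_i\bigr)$.

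The main technical obstacle is the computation of the exceptional multiplicity $\langle \lambda, d_i\check\alpha_{i^*}\rangle$ in part (2). In the simply-laced case of \cite{braverman2016coulomb} the factor $d_i$ is invisible and one may work representation-theoretically without tracking coroot normalizations; here one must carefully analyze the tangent data at a generic point of $\dt{E}_i$ and the compatibility of the Hecke modification with the coweight $\lambda$ to recover the claimed coefficient. Once this is in place, all four statements reduce to the corresponding computations of \cite[Lemma 2.11, 2.12]{braverman2016coulomb} verbatim.
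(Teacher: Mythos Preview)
Your proposal is essentially the paper's own proof: the paper simply states that the argument ``is identical to that of \textit{loc.\ cit.}'' (i.e.\ \cite[Lemma 2.11, 2.12]{braverman2016coulomb}), and your plan is precisely to rerun that argument while tracking the pairing $\langle \omega_j,\check\alpha_i\rangle=d_i\delta_{ij}$, which is exactly the modification needed in the non-simply-laced case.

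One small caution on your sketch of part (3): the expression in (3) involves the \emph{total} pullback $\mathbf{q}^*(\partial_i\dt{Z}^{\alpha^*})$ with a \emph{minus} sign on the exceptional sum, whereas (2) records the divisor of $\mathbf{q}^*F_{\alpha^*}$ as a function, with strict transforms and a \emph{plus} sign on the exceptional sum. These are not the same expression, so (3) does not literally drop out of (2) by declaring $\mathbf{q}^*F_{\alpha^*}$ to be a section of $\mathbf{p}^*\mathcal L$; in \cite{braverman2016coulomb} the identification of $\mathbf{p}^*\mathcal L$ (their Lemma~2.12) is obtained by a direct computation of the line bundle rather than by reading it off from Lemma~2.11. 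Following their actual argument for Lemma~2.12, with the $d_i$'s inserted as you indicate, gives (3) without this bookkeeping issue. The second isomorphism in (3) is exactly as you say: $\sum_i d_i\,\mathbf{q}^*(\partial_i\dt{Z}^{\alpha^*})=\mathbf{q}^*\mathrm{div}(F_{\alpha^*})$ is principal.
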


Define the ideal sheaf $\mathcal K_i=\mathcal{I}_i^{\langle \lambda,d_i\check{\alpha}_{i^*}\rangle}+\mathcal J_i^{d_i}$, where $\mathcal{I}_i$ (resp. $\mathcal J_i$) is the ideal sheaf of divisor $\partial_i\dt{Z}^{\alpha^*}$ (resp. $\pi_{\alpha^*}^*\mathbb{A}^{\alpha^*}_i$). Note that subvarieties defined by different $\mathcal K_i$ are disjoint. Then define $\mathcal K=\cap_{i\in I}\mathcal K_i$. The same argument in the proof of \cite[Proposition 2.10]{braverman2016coulomb} shows that $\mathbf{q}^{-1}\mathcal K\cdot\mathcal O_{\dt{\mathcal {GZ}}^{-\mu}_{\lambda}}\cong \mathbf{p}^*\mathcal L$.

The universal property of blow up implies that there is projective morphism $\Upsilon : \dt{\mathcal {GZ}}^{-\mu}_{\lambda}\to \mathrm{Bl}_{\mathcal K}\dt{Z}^{\alpha^*}$. Moreover, the fiber of $\mathbf{q}:\dt{\mathcal {GZ}}^{-\mu}_{\lambda}\to \dt{Z}^{\alpha^*}$ is either a point or isomorphic to $\mathbb{P}^1$, and the fiber of $\mathrm{Bl}_{\mathcal K}\dt{Z}^{\alpha^*}\to \dt{Z}^{\alpha^*}$ has dimension $\le 1$ (since locally $\mathcal K$ is generated by $2$ elements). It follows that $\Upsilon$ is finite, hence it's a normalization map. In other words we have shown the following generalization of \cite[Proposition 2.10]{braverman2016coulomb}

\begin{proposition}
The identity isomorphism over $\pi_{\alpha^*}^{-1}(\mathbb{G}_m^{\alpha^*}) $ extends to a regular isomorphism of varieties over $ \dt{Z}^{\alpha^*}$: $$ \dt{\mathcal {GZ}}^{-\mu}_{\lambda}\overset{\sim}{\longrightarrow} \widetilde{\mathrm{Bl}}_{\mathcal K}\dt{Z}^{\alpha^*},\; \dt{\mathcal {W}}^{\lambda}_{\mu}\overset{\sim}{\longrightarrow} \widetilde{\mathrm{Bl}}^{\mathrm{aff}}_{\mathcal K}\dt{Z}^{\alpha^*}.$$
\end{proposition}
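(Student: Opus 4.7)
The plan is to complete the argument already sketched in the paragraphs immediately preceding the statement. First, having established that $\mathbf q^{-1}\mathcal K\cdot \mathcal O_{\dt{\mathcal{GZ}}^{-\mu}_\lambda}\cong \mathbf p^*\mathcal L$ is an invertible sheaf, the universal property of the blow-up yields a canonical morphism $\Upsilon:\dt{\mathcal{GZ}}^{-\mu}_\lambda\to \mathrm{Bl}_{\mathcal K}\dt Z^{\alpha^*}$ over $\dt Z^{\alpha^*}$, and this morphism is projective because the source is projective over $\dt Z^{\alpha^*}$. Over the open locus $\pi_{\alpha^*}^{-1}(\mathbb G_m^{\alpha^*})$ where $\mathcal K$ is the unit ideal, $\mathrm{Bl}_{\mathcal K}$ is the identity and $\Upsilon$ restricts to the identity; in particular $\Upsilon$ is birational.

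Second, I would show that $\Upsilon$ is finite. The fibers of $\mathbf q$ are either points or copies of $\mathbb P^1$, while the fibers of $\mathrm{Bl}_{\mathcal K}\dt Z^{\alpha^*}\to \dt Z^{\alpha^*}$ are at most one-dimensional because the supports of the $\mathcal K_i$ are pairwise disjoint and each $\mathcal K_i$ is locally generated by two elements. A positive-dimensional fiber of $\Upsilon$ would therefore be a $\mathbb P^1$-fiber of $\mathbf q$ mapping dominantly onto a one-dimensional fiber of the blow-up, contradicting birationality. Hence $\Upsilon$ is quasi-finite, and a projective quasi-finite morphism is finite.

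The last step is to identify $\Upsilon$ with the normalization map. Since $\dt{\mathcal{GZ}}^{-\mu}_\lambda$ is smooth (hence normal), and any finite birational morphism from a normal variety to an integral variety is the normalization of the target, we obtain the first isomorphism $\dt{\mathcal{GZ}}^{-\mu}_\lambda\cong\widetilde{\mathrm{Bl}}_{\mathcal K}\dt Z^{\alpha^*}$. The second isomorphism follows by restriction to the open complement of the exceptional divisors $\dt E_i$, which is $\dt{\mathcal W}^\lambda_\mu\subset\dt{\mathcal{GZ}}^{-\mu}_\lambda$ on the left and, by definition, the affine locus $\widetilde{\mathrm{Bl}}^{\mathrm{aff}}_{\mathcal K}\dt Z^{\alpha^*}$ on the right. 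The main obstacle I anticipate is the careful bookkeeping with the symmetrizer factors $d_i$ appearing in $\mathcal K_i=\mathcal I_i^{\langle\lambda,d_i\check\alpha_{i^*}\rangle}+\mathcal J_i^{d_i}$: one must verify both the local two-generator property used in the fiber-dimension estimate and the smoothness of the source in the non-simply-laced setting, where the simply-laced argument of \cite{braverman2016coulomb} requires modification.
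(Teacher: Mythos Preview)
Your outline matches the paper's proof, which is precisely the paragraph immediately preceding the proposition: universal property of the blow-up produces $\Upsilon$, bound the fiber dimensions of $\mathbf q$ and of the blow-up map, deduce finiteness, conclude normalization. The structure is correct, but your argument for quasi-finiteness in the second step does not work as written. A positive-dimensional fiber of $\Upsilon$ over a point $y\in\mathrm{Bl}_{\mathcal K}\dt Z^{\alpha^*}$ would indeed have to be an entire $\mathbb P^1$-fiber of $\mathbf q$, but such a fiber then maps to the \emph{single point} $y$, not ``dominantly onto a one-dimensional fiber of the blow-up''; and birationality by itself never rules out contracting a curve (any blow-down is birational and contracts curves).

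Here is a valid replacement. Since $\Upsilon$ is projective and birational and the blow-up of an integral scheme along a nonzero ideal is integral, $\Upsilon$ is surjective. For every $z\in\dt Z^{\alpha^*}$ one has $\Upsilon^{-1}\bigl(\pi^{-1}(z)\bigr)=\mathbf q^{-1}(z)$, where $\pi$ denotes the blow-up map. If $\mathbf q^{-1}(z)\cong\mathbb P^1$ were contracted by $\Upsilon$ to a single point, then the remaining points of $\pi^{-1}(z)$ would lie outside the image of $\Upsilon$; but $\pi^{-1}(z)$ is one-dimensional precisely when $z$ lies in the support of $\mathcal K$, which is exactly the locus over which $\mathbf q$ has $\mathbb P^1$-fibers, so this contradicts surjectivity. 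Alternatively, observe that $\Upsilon^*\mathcal O_{\mathrm{Bl}}(1)\cong\mathbf p^*\mathcal L$ restricts nontrivially to each $\mathbb P^1$-fiber of $\mathbf q$ (by item~(3) of the preceding lemma together with $\dt E_i\cdot\mathbb P^1<0$), whereas the pullback of any line bundle along a constant map is trivial; either way $\Upsilon$ is quasi-finite, hence finite. The remainder of your argument (normality of the source, identification with the normalization, and passage to the affine locus) is correct and agrees with the paper.
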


Replacing ${\mathrm{Bl}}^{\mathrm{aff}}_{\mathcal K}$ by $\widetilde{\mathrm{Bl}}^{\mathrm{aff}}_{\mathcal K}$ in \cite[(2.13)]{braverman2016coulomb}, we see that \cite[Proposition 2.9]{braverman2016coulomb} is true for all simple $G$, without assumption of ADE-type (this is pointed out in Remark 2.14 of \textit{loc. cit.}). We record it here for completeness.

\begin{proposition}[Factorization]\label{Prop: Factorization}
The birational isomorphism $\varphi$ extends to a regular isomorphism of varieties over $\left(\mathbb{G}_m^{\beta^*}\times \mathbb A^1\right)_{\mathrm{disj}}$: $$\left(\mathbb{G}_m^{\beta^*}\times \mathbb A^1\right)_{\mathrm{disj}}\times_{\mathbb A^{\alpha^*}}\overline{\mathcal{W}}^{\lambda}_{\mu}\overset{\sim}{\longrightarrow} \left(\mathbb{G}_m^{\beta^*}\times \mathbb A^1\right)_{\mathrm{disj}}\times_{\mathbb A^{\beta^*}\times \mathbb A^1} \left(\mathring{Z}^{\beta^*}\times \mathcal S_{\langle \lambda,\check{\alpha}_{i^*}\rangle} \right)$$where $\beta=\alpha-\alpha_i$, and $\mathcal S_{\langle \lambda,\check{\alpha}_{i^*}\rangle} $ is the $A_{\langle \lambda,\check{\alpha}_{i^*}\rangle-1}$ singularity defined by equation $xy=w^{\langle \lambda,\check{\alpha}_{i^*}\rangle}$, and the natural map $\mathcal S_{\langle \lambda,\check{\alpha}_{i^*}\rangle} \to Z^{\alpha_i^*}\cong \mathbb A^2$ is given by $(x,y,w)\mapsto (y,w)$.
\end{proposition}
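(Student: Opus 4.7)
The plan is to follow the proof of \cite[Proposition 2.9]{braverman2016coulomb} essentially line by line, with the only substantive change being the systematic replacement of the affine blowup $\mathrm{Bl}^{\mathrm{aff}}_{\mathcal K}$ by its normalization $\widetilde{\mathrm{Bl}}^{\mathrm{aff}}_{\mathcal K}$, as dictated by our generalization above of \cite[Proposition 2.10]{braverman2016coulomb}. Since $(\mathbb{G}_m^{\beta^*}\times \mathbb A^1)_{\mathrm{disj}}$ is contained in $\dt{Z}^{\alpha^*}$, it suffices to establish the isomorphism with $\dt{\mathcal{W}}^{\lambda}_{\mu}$ in place of $\overline{\mathcal{W}}^{\lambda}_{\mu}$, and by the preceding proposition $\dt{\mathcal{W}}^{\lambda}_{\mu}\cong \widetilde{\mathrm{Bl}}^{\mathrm{aff}}_{\mathcal K} \dt{Z}^{\alpha^*}$, so the task reduces to a local computation of normalized affine blowups.

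Next, I would invoke the standard factorization structure of the Zastava space: over $(\mathbb{G}_m^{\beta^*}\times \mathbb A^1)_{\mathrm{disj}}$ both $\dt{Z}^{\alpha^*}$ and the ideal $\mathcal K$ split as products of a $\beta^*$-part and an $\alpha_i^*$-part, with $Z^{\alpha_i^*}\cong \mathbb A^2$; in suitable local coordinates $(y,w)$ on this $\mathbb A^2$, the $\alpha_i^*$-part of $\mathcal K$ becomes the monomial ideal $(y^{d_i k},\, w^{d_i})$ with $k:=\langle \lambda,\check{\alpha}_{i^*}\rangle$. The $\beta^*$-part of the product yields the factor $\mathring{Z}^{\beta^*}$ in the claimed isomorphism, and all the non-trivial blowup behaviour is concentrated in the $\alpha_i^*$-part. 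This is the non-simply-laced analogue of \cite[(2.13)]{braverman2016coulomb}, and the argument there applies verbatim modulo the replacement of $\mathrm{Bl}^{\mathrm{aff}}$ by $\widetilde{\mathrm{Bl}}^{\mathrm{aff}}$.

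The crux, and what I expect to be the main technical step, is the explicit identification $\widetilde{\mathrm{Bl}}^{\mathrm{aff}}_{(y^{d_i k},\,w^{d_i})}\mathbb A^2\cong \mathcal S_k$. The relevant affine chart of the ordinary affine blowup is $\Spec \mathbb C[y,w,s]/(w^{d_i}s - y^{d_i k})$; the rational function $u := y^k/w$ satisfies $u^{d_i}=s$ and $wu=y^k$, hence is integral over this ring, and adjoining it yields the normalization $\mathbb C[y,w,u]/(wu - y^k)\cong \mathcal S_k$. Thus the $d_i$-th power obstruction appearing for long coroots is absorbed entirely into the normalization, which is precisely why the ADE proof generalizes with only this cosmetic change.

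Finally, the birational isomorphism $\varphi$ is already regular over $\pi_{\alpha^*}^{-1}(\mathbb{G}_m^{\alpha^*})$ by construction; combining this with the local matching of normalized blowups above, and invoking uniqueness of normalization inside the common fraction field, extends $\varphi$ to a regular isomorphism over all of $(\mathbb{G}_m^{\beta^*}\times \mathbb A^1)_{\mathrm{disj}}$, matching the projection to $Z^{\alpha_i^*}$ with the stated map $(x,y,w)\mapsto (y,w)$ on $\mathcal S_k$.
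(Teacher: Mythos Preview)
Your proposal is correct and follows exactly the approach the paper takes: the paper's entire argument is the one-line observation that replacing $\mathrm{Bl}^{\mathrm{aff}}_{\mathcal K}$ by $\widetilde{\mathrm{Bl}}^{\mathrm{aff}}_{\mathcal K}$ in \cite[(2.13)]{braverman2016coulomb} makes \cite[Proposition 2.9]{braverman2016coulomb} go through for all simple $G$ (as already noted in Remark 2.14 of \textit{loc.\ cit.}). Your write-up is in fact more detailed than the paper's, since you supply the explicit verification that the normalization of $\mathbb{C}[y,w,s]/(w^{d_i}s - y^{d_ik})$ is the $A_{k-1}$ surface $\mathbb{C}[y,w,u]/(wu - y^{k})$, which the paper leaves implicit.
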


\begin{proof}[Proof of Proposition \ref{Prop: Poisson Extension}]
For type $A_1$, this is a straightforward computation. For general $G$, we deduce from Proposition \ref{Prop: Factorization} that the Poisson structure extends over $\dt{E}_i$ for every $i\in I$, so it extends globally. And it's regular outside of a set of codimension $2$, hence it's regular on $\mathcal W^{\lambda}_{\mu}$.
\end{proof}

Combine Corollary \ref{Cor: Rational Gorenstein}, Theorem \ref{Nam1} and Proposition \ref{Prop: Poisson Extension}, we obtain the main results of this section:
\begin{theorem}\label{Thm: Symplectic Singularities}
$\overline{\mathcal{W}}^{\lambda}_{\mu}$ has symplectic singularities.
\end{theorem}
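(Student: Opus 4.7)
The plan is to invoke Namikawa's characterization of symplectic singularities (Theorem \ref{Nam1}), which for the normal variety $\overline{\mathcal W}^{\lambda}_{\mu}$ reduces the assertion to two ingredients: rational Gorenstein singularities and the existence of a symplectic form on the smooth locus. The first ingredient is furnished directly by Corollary \ref{Cor: Rational Gorenstein}, whose proof bootstraps the known dominant case from the literature via the transversal slice description in Theorem \ref{Thm: Transversal Slice}.

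For the second ingredient, I would first pin down the smooth locus of $\overline{\mathcal W}^{\lambda}_{\mu}$. Combining the monopole bubbling decomposition $\overline{\mathcal W}^{\lambda}_{\mu}=\coprod_{\mu\le\nu\le\lambda}\mathcal W^{\nu}_{\mu}$ with Theorem \ref{Thm: Smoothness} (each stratum is smooth) and with Theorem \ref{Thm: Transversal Slice} (a point of the stratum indexed by $\nu\neq\lambda$ admits a transversal slice isomorphic to the singular variety $\overline{\mathcal W}^{\lambda}_{\nu}$), one concludes that the smooth locus coincides with the open stratum $\mathcal W^{\lambda}_{\mu}$. On this smooth locus, Proposition \ref{Prop: Poisson Extension} already supplies a regular Poisson structure obtained by extending the classical symplectic Poisson structure on the open dense subvariety $\mathring Z^{\alpha^*}\hookrightarrow \overline{\mathcal W}^{\lambda}_{\mu}$.

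To promote the extended Poisson structure to a symplectic form on all of $\mathcal W^{\lambda}_{\mu}$, one must verify that the Poisson bivector is non-degenerate everywhere on this stratum, not merely on the dense open subset $\mathring Z^{\alpha^*}$. I would secure this using the first half of Conjecture \ref{Conj: Main}, already established in this note as a consequence of Theorem \ref{Thm: Smoothness}: the stratum $\mathcal W^{\lambda}_{\mu}$ is a single symplectic leaf. Since symplectic leaves are the maximal integral submanifolds of the rank distribution of a Poisson bivector, and since $\dim \mathcal W^{\lambda}_{\mu}$ matches the generic symplectic dimension realized on $\mathring Z^{\alpha^*}$, the Poisson bivector attains top rank throughout $\mathcal W^{\lambda}_{\mu}$. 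Its inverse is then a regular, closed, non-degenerate $2$-form, i.e.\ a symplectic form on the smooth locus.

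The main obstacle I anticipate is precisely this last step: it is not automatic that a Poisson bivector which is non-degenerate on a dense open subset of a smooth variety remains non-degenerate everywhere, and ignoring the symplectic leaf decomposition would leave the inverse $2$-form only as a rational form with possibly a polar locus along the divisor where the rank drops. Everything else---rational Gorenstein-ness, the transversal slice structure, and the construction of the Poisson extension---has already been assembled in the earlier sections, so once the non-degeneracy on the whole top stratum is in hand, Namikawa's criterion concludes the argument at once.
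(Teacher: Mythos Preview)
Your approach matches the paper's one-line proof: combine Namikawa's criterion (Theorem~\ref{Nam1}) with Corollary~\ref{Cor: Rational Gorenstein} and Proposition~\ref{Prop: Poisson Extension}. The extra step you flag as the ``main obstacle'' is in fact already contained in Proposition~\ref{Prop: Poisson Extension}: there ``regular on $\mathcal W^{\lambda}_{\mu}$'' means non-degenerate (symplectic), not merely ``defined''---the first clause of that proposition already asserts the bivector extends to all of $\overline{\mathcal W}^{\lambda}_{\mu}$, so the second clause would be vacuous otherwise. Concretely, the proof of Proposition~\ref{Prop: Poisson Extension} shows via the factorization that the bivector is symplectic outside a set of codimension~$2$, and on a smooth variety the degeneracy locus of a bivector (the zero locus of its top exterior power) is either empty or a divisor; hence it is symplectic on all of $\mathcal W^{\lambda}_{\mu}$. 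Your detour through the symplectic-leaf statement is not incorrect, but it is circular in spirit: the assertion that $\mathcal W^{\lambda}_{\mu}$ is a symplectic leaf (via \cite[Remark~3.19]{braverman2016coulomb}) is itself deduced from the non-degeneracy clause of Proposition~3.18 there, which is exactly Proposition~\ref{Prop: Poisson Extension} here.
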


\subsection{$\mathbb Q$-Factorial Terminalization}
Assume that $G$ is of adjoint type, and suppose that $\lambda=\sum_{i=1}^n \lambda_i$, where $\lambda_i$ are fundamental coweights, then consider the convolution
$$m:\mathrm{Gr}^{\vec{\lambda}}=\mathrm{Gr}^{\lambda_1}\widetilde{\times}\cdots \widetilde{\times}\mathrm{Gr}^{\lambda_n}\subset \overline{\mathrm{Gr}}^{\vec{\lambda}}=\overline{\mathrm{Gr}}^{\lambda_1}\widetilde{\times}\cdots \widetilde{\times}\overline{\mathrm{Gr}}^{\lambda_n}\to \overline{\mathrm{Gr}}^{\lambda} $$and define $$\mW^{\vec{\lambda}}_{\mu}=\mathrm{Gr}^{\vec{\lambda}}\times _{\overline{\mathrm{Gr}}^{\lambda}}\bW^{\lambda}_{\mu},\; \bW^{\vec{\lambda}}_{\mu}=\overline{\mathrm{Gr}}^{\vec{\lambda}}\times _{\overline{\mathrm{Gr}}^{\lambda}}\bW^{\lambda}_{\mu}$$
Here what we called $\bW^{\vec{\lambda}}_{\mu}$ is denoted by $\widetilde{\mathcal W}^{\underline{\lambda}}_{\mu}$ in \cite{braverman2018line}. Note that $m:\bW^{\vec{\lambda}}_{\mu}\to \bW^{\underline{\lambda}}_{\mu}$ is projective and birational, in fact it's an isomorphism on the open loucs $\mathring{Z}^{\alpha^*}$, so $\bW^{\vec{\lambda}}_{\mu}$ inherits a Poisson structure on $\mathring{Z}^{\alpha^*}$. The same commutative diagram in the proof of Proposition \ref{Prop: Smoothness} with $\mathsf H^{\underline{\lambda}}_{\mathrm{BD}}$ replaced by a convolution version $\mathsf H^{\vec{\lambda}}_{\mathrm{BD}}$ shows the following:
\begin{proposition}\label{Prop: Smooth Locus 2}
$\bW^{\vec{\lambda}}_{\mu}$ is normal and its smooth locus is $\mW^{\vec{\lambda}}_{\mu}$.
\end{proposition}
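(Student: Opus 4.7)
The plan is to follow the template of Propositions \ref{Prop: Smoothness} and \ref{Prop: Smooth Locus}, replacing the Beilinson--Drinfeld Hecke stack by its convolution counterpart at the single point $0 \in \mathbb{P}^1$. Concretely, I would define the convolution Hecke stack $\mathsf H^{\vec{\lambda}}$ to parametrize chains $(\mathcal F^0_G, \ldots, \mathcal F^n_G)$ of $G$-bundles on $\mathbb{P}^1_S$, each trivialized at $\infty$, together with modifications $\mathcal F^{i-1}_G|_{\mathbb{P}^1_S \setminus 0} \cong \mathcal F^i_G|_{\mathbb{P}^1_S \setminus 0}$ having pole at $0$ bounded by $\lambda_i$, compatibly with the trivializations. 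The two legs $h^{\leftarrow}_G, h^{\rightarrow}_G: \mathsf H^{\vec{\lambda}} \to \Bun_G(\mathbb{P}^1,\infty)$ record $\mathcal F^0_G$ and $\mathcal F^n_G$. One then forms the Cartesian pullbacks
\[
\overline{\mathsf C}^{\vec{\lambda}}_{\mu} := \mathsf H^{\vec{\lambda}} \times_{h^{\rightarrow}_G,\, \Bun_G(\mathbb{P}^1,\infty)} \Bun_B^{w_0\mu}(\mathbb{P}^1,\infty), \qquad \overline{\mathsf W}^{\vec{\lambda}}_{\mu} := \overline{\mathsf C}^{\vec{\lambda}}_{\mu} \times_{h^{\leftarrow}_G,\, \Bun_G(\mathbb{P}^1,\infty)} \Spec \mathbb{C},
\]
so that $\bW^{\vec{\lambda}}_{\mu} = \overline{\mathsf W}^{\vec{\lambda}}_{\mu,\mathrm{red}}$. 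Exactly as in Proposition \ref{Prop: Smoothness}, the map $\Spec \mathbb{C} \to \Bun_G(\mathbb{P}^1,\infty)$ is an open immersion onto the trivial bundle locus, so the induced map $\bW^{\vec{\lambda}}_{\mu} \hookrightarrow \overline{\mathsf C}^{\vec{\lambda}}_{\mu,\mathrm{red}}$ is an open immersion.

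The next step is to exploit that $h^{\rightarrow}_G$ is smooth-locally trivial with fiber the convolution Grassmannian $\overline{\mathrm{Gr}}^{\vec{\lambda}^*}$, hence so is its pullback $\overline{\mathsf C}^{\vec{\lambda}}_{\mu} \to \Bun_B^{w_0\mu}(\mathbb{P}^1,\infty)$. Since $\Bun_B^{w_0\mu}$ is smooth, both normality and the identification of the smooth locus for $\overline{\mathsf C}^{\vec{\lambda}}_{\mu,\mathrm{red}}$ reduce, by faithfully flat descent, to the corresponding statements for $\overline{\mathrm{Gr}}^{\vec{\lambda}^*}$. These then transfer to the open subscheme $\bW^{\vec{\lambda}}_{\mu}$, and unwinding the definitions identifies the preimage of $\mathrm{Gr}^{\vec{\lambda}^*}$ with $\mW^{\vec{\lambda}}_{\mu} = \mathrm{Gr}^{\vec{\lambda}} \times_{\overline{\mathrm{Gr}}^{\lambda}} \bW^{\lambda}_{\mu}$.

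The whole proposition thus reduces to the claim that $\overline{\mathrm{Gr}}^{\vec{\lambda}^*}$ is normal with smooth locus $\mathrm{Gr}^{\vec{\lambda}^*}$. I would prove this by induction on $n$ using the tower of smooth-locally trivial fibrations
\[
\overline{\mathrm{Gr}}^{\lambda_n^*} \widetilde{\times} \cdots \widetilde{\times} \overline{\mathrm{Gr}}^{\lambda_k^*} \longrightarrow \overline{\mathrm{Gr}}^{\lambda_n^*} \widetilde{\times} \cdots \widetilde{\times} \overline{\mathrm{Gr}}^{\lambda_{k+1}^*}
\]
with fiber $\overline{\mathrm{Gr}}^{\lambda_k^*}$. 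The base case is the classical fact that, for adjoint $G$ and a fundamental coweight $\lambda_i$, the Schubert variety $\overline{\mathrm{Gr}}^{\lambda_i^*}$ is normal with smooth locus the open $G_{\mathcal{O}}$-orbit $\mathrm{Gr}^{\lambda_i^*}$ (Faltings, Pappas--Rapoport). The inductive step is the general fact that a smooth-locally trivial fibration $Y \to X$ with normal base and normal fiber $F$ has normal total space and smooth locus equal to the preimage of $F^{\mathrm{sm}}$ intersected with $X^{\mathrm{sm}}$, which follows from Serre's criterion and the local structure of smooth morphisms.

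The principal technical obstacle is the invocation of classical normality for individual fundamental-coweight Schubert varieties in the adjoint setting; granted this, the propagation through the convolution tower and the pullback along the Cartesian diagram are entirely formal.
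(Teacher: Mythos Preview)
Your proposal is correct and follows exactly the route the paper indicates: the paper's proof is the single sentence ``the same commutative diagram in the proof of Proposition~\ref{Prop: Smoothness} with $\mathsf H^{\underline{\lambda}}_{\mathrm{BD}}$ replaced by a convolution version $\mathsf H^{\vec{\lambda}}_{\mathrm{BD}}$,'' and you have spelled out precisely this reduction, including the analogue of Proposition~\ref{Prop: Smooth Locus} (reduction to the fiber $\overline{\mathrm{Gr}}^{\vec{\lambda}^*}$) and the standard inductive argument for normality and smooth locus of the convolution Grassmannian. The only cosmetic difference is that you work with the convolution Hecke stack at the fixed point $0$ rather than its Beilinson--Drinfeld version over $\mathbb A^N$, which is immaterial since the object $\bW^{\vec{\lambda}}_{\mu}$ under consideration carries no deformation parameters.
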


The factorization of $\bW^{\vec{\lambda}}_{\mu}$ is also proven in \cite{braverman2018line}, and it holds for general simple $G$ without assumption on ADE type. Here we record it for completeness:

\begin{proposition}[Factorization]\label{Prop: Factorization2}
The birational isomorphism $\varphi$ extends to a regular isomorphism of varieties over $\left(\mathbb{G}_m^{\beta^*}\times \mathbb A^1\right)_{\mathrm{disj}}$: $$\left(\mathbb{G}_m^{\beta^*}\times \mathbb A^1\right)_{\mathrm{disj}}\times_{\mathbb A^{\alpha^*}}\overline{\mathcal{W}}^{\vec{\lambda}}_{\mu}\overset{\sim}{\longrightarrow} \left(\mathbb{G}_m^{\beta^*}\times \mathbb A^1\right)_{\mathrm{disj}}\times_{\mathbb A^{\beta^*}\times \mathbb A^1} \left(\mathring{Z}^{\beta^*}\times \widetilde{\mathcal S}_{\langle \lambda,\check{\alpha}_{i^*}\rangle} \right)$$where $\beta=\alpha-\alpha_i$, and $\widetilde{\mathcal S}_{\langle \lambda,\check{\alpha}_{i^*}\rangle} $ is the minimal resolution of the $A_{\langle \lambda,\check{\alpha}_{i^*}\rangle-1}$ singularity $\mathcal S_{\langle \lambda,\check{\alpha}_{i^*}\rangle}$.
\end{proposition}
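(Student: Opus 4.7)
The plan is to deduce the convolution factorization from Proposition~\ref{Prop: Factorization} by exploiting that the convolution morphism $m:\bW^{\vec{\lambda}}_{\mu}\to \bW^{\lambda}_{\mu}$ is projective, birational, and compatible with the Beilinson--Drinfeld factorization structure over $\mathbb{A}^N$. Setting $n:=\langle \lambda,\check{\alpha}_{i^*}\rangle$, I would first pull back the factorization isomorphism of Proposition~\ref{Prop: Factorization} along $m$ to produce a proper birational morphism
\begin{equation*}
\left(\mathbb{G}_m^{\beta^*}\times \mathbb A^1\right)_{\mathrm{disj}}\times_{\mathbb A^{\alpha^*}}\bW^{\vec{\lambda}}_{\mu} \longrightarrow \left(\mathbb{G}_m^{\beta^*}\times \mathbb A^1\right)_{\mathrm{disj}}\times_{\mathbb A^{\beta^*}\times \mathbb A^1} \left(\mathring{Z}^{\beta^*}\times \mathcal S_{n}\right),
\end{equation*}
so the problem becomes identifying the left hand side with the same product after replacing $\mathcal S_n$ by its minimal resolution $\widetilde{\mathcal S}_n$.

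Next, I would use the factorization of the BD Grassmannian $\bGr^{\vec{\lambda}}_{\mathrm{BD}}$ over the disjoint locus $(\mathbb A^N)_{\mathrm{disj}}$: each partition of the marked points into two disjoint clusters induces a factorization $\bGr^{\vec{\lambda}_\beta}_{\mathrm{BD}}\times \bGr^{\vec{\lambda}_{\alpha_i}}_{\mathrm{BD}}$ indexed by the corresponding partition of the fundamental coweight summands. Pulling this back through the Cartesian square defining $\bW^{\vec{\lambda}}_{\mu}$ (compare the proof of Proposition~\ref{Prop: Smooth Locus 2}), one sees that the left hand side splits as a product whose $\beta^*$-factor maps isomorphically onto $\mathring{Z}^{\beta^*}$ (the target is already smooth, so $m$ is an isomorphism there), and whose remaining factor is the convolution slice attached to the contribution along $\check{\alpha}_{i^*}$ alone.

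The core local computation is then the rank-one statement: for the $\SL_2$-subgroup $G_{i^*}\subset G$ associated with $\check{\alpha}_{i^*}$, the restriction of each fundamental coweight $\omega^\vee_j$ of the adjoint group $G$ is either trivial or the minuscule coweight of $G_{i^*}$ (since $\langle \omega^\vee_j,\check{\alpha}_{i^*}\rangle\in\{0,1\}$), so this remaining factor reduces to the classical convolution of $n$ copies of $\mathbb P^1$ lying over the $A_{n-1}$-singularity $\mathcal S_n$, which is well known to produce the minimal resolution $\widetilde{\mathcal S}_n$.

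The main obstacle is verifying carefully that the proper birational morphism to $\mathring{Z}^{\beta^*}\times \widetilde{\mathcal S}_n$ so constructed is indeed an isomorphism: this should follow from the normality of $\bW^{\vec{\lambda}}_{\mu}$ (Proposition~\ref{Prop: Smooth Locus 2}), the smoothness of $\mathring{Z}^{\beta^*}\times \widetilde{\mathcal S}_n$, and Zariski's main theorem, but it requires carefully matching the birational structures on both sides. A secondary subtlety is handling the non-simply-laced case, where the argument has to be performed through the $\SL_2$-subgroup even though the ambient fundamental coweights of $G$ need not be minuscule.
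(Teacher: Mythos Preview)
The paper does not give its own proof of this proposition: the sentence immediately preceding it reads ``The factorization of $\bW^{\vec{\lambda}}_{\mu}$ is also proven in \cite{braverman2018line}, and it holds for general simple $G$ without assumption on ADE type. Here we record it for completeness.'' So there is no argument in the paper to compare your sketch against beyond that citation.

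That said, your outline is in the spirit of the expected argument. The decisive step is the one you isolate at the end: over the locus where a single $\alpha_{i^*}$-colored point has been separated from the rest, the convolution reduces to a rank-one ($\SL_2$) computation, and the $n$-fold convolution of minuscule $\mathbb{P}^1$'s over the slice $\mathcal S_n$ is classically the minimal resolution $\widetilde{\mathcal S}_n$ of the $A_{n-1}$ singularity. Your observation that each fundamental coweight of the adjoint group pairs to $0$ or $1$ with $\check{\alpha}_{i^*}$ is exactly what makes this reduction clean, and together with normality (Proposition~\ref{Prop: Smooth Locus 2}) and Zariski's main theorem the conclusion follows.

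One point to tighten: you phrase the splitting step via the BD Grassmannian factorization over $(\mathbb{A}^N)_{\mathrm{disj}}$, but $\bW^{\vec{\lambda}}_{\mu}$ as defined here is a pure convolution object (all $N$ marked points sit at $0\in\mathbb{A}^1$), so that factorization is not literally available. What is actually needed is that the Zastava-type factorization over $\mathbb{A}^{\alpha^*}$ appearing in Proposition~\ref{Prop: Factorization} lifts along $m$ to $\bW^{\vec{\lambda}}_{\mu}$, i.e.\ that the composite $\pi_{\alpha^*}\circ\mathbf q\circ m$ again satisfies factorization. This is the compatibility checked in \cite{braverman2018line}; once it is in hand, your local $\SL_2$ analysis goes through unchanged.
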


\begin{theorem}\label{Thm: Terminalization}
$m:\overline{\mathcal{W}}^{\vec{\lambda}}_{\mu}\to \overline{\mathcal{W}}^{\lambda}_{\mu}$ is a $\mathbb Q$-factorial terminalization map.
\end{theorem}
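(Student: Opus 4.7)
The map $m$ is projective by construction and birational, as it restricts to an isomorphism over the open subset $\mathring{Z}^{\alpha^*}\subset \overline{\mathcal{W}}^{\lambda}_{\mu}$. To show it is a $\mathbb{Q}$-factorial terminalization I would verify three things: $\overline{\mathcal{W}}^{\vec{\lambda}}_{\mu}$ has symplectic singularities (which automatically makes $m$ crepant, since both source and target then have trivial canonical class); its singular locus has codimension at least $4$ (terminality in the Gorenstein symplectic setting); and $\overline{\mathcal{W}}^{\vec{\lambda}}_{\mu}$ is $\mathbb{Q}$-factorial.

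For symplectic singularities I would follow the template of Theorem \ref{Thm: Symplectic Singularities}. Proposition \ref{Prop: Smooth Locus 2} supplies normality and identifies the smooth locus as $\mathcal{W}^{\vec{\lambda}}_{\mu}$, while rational Gorensteinness can be inherited from $\overline{\mathcal{W}}^{\lambda}_{\mu}$ (Corollary \ref{Cor: Rational Gorenstein}) using the crepancy of $m$, or established directly through a Beilinson--Drinfeld Hecke stack presentation analogous to Proposition \ref{Prop: Smoothness}. The Poisson structure on $\mathring{Z}^{\alpha^*}$ extends across every generic point of $\partial_i\dt{Z}^{\alpha^*}$ using Proposition \ref{Prop: Factorization2}, because on that factorization locus the variety is \'etale-locally $\mathring{Z}^{\beta^*}\times \widetilde{\mathcal{S}}_{\langle\lambda,\check{\alpha}_{i^*}\rangle}$ with both factors smooth and symplectic; the remaining locus has codimension at least $2$ in $\overline{\mathcal{W}}^{\vec{\lambda}}_{\mu}$, so Hartogs extends the Poisson structure globally and Namikawa's Theorem \ref{Nam1} gives symplectic singularities. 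The same factorization argument simultaneously delivers terminality: $\overline{\mathcal{W}}^{\vec{\lambda}}_{\mu}$ is smooth over every generic point of $\partial_i \dt{Z}^{\alpha^*}$, so the singular locus projects into the deep diagonal of $Z^{\alpha^*}$, and a dimension count controlling the fiber jumps of $\mathbf{q}$ (which occur only along codimension $\ge 2$ strata) then yields codimension at least $4$ for the singular locus.

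Finally, for $\mathbb{Q}$-factoriality, I would analyze the divisor class group of $\overline{\mathcal{W}}^{\vec{\lambda}}_{\mu}$ by combining exceptional divisors coming from the chains of $\mathbb{P}^1$'s in each local model $\widetilde{\mathcal{S}}_{\langle\lambda,\check{\alpha}_{i^*}\rangle}$---each of which is Cartier in the smooth local model and should glue to a $\mathbb{Q}$-Cartier divisor globally---together with pullbacks of $\mathbb{Q}$-Cartier divisors from $\overline{\mathcal{W}}^{\lambda}_{\mu}$. The main obstacle I anticipate is precisely this gluing step: promoting the local Cartier divisors into global $\mathbb{Q}$-Cartier divisors requires a genuinely global Picard-theoretic argument that correctly matches the combinatorics of the minimal resolutions $\widetilde{\mathcal{S}}$ with the component structure of the exceptional locus of $m$, and carrying out this matching rigorously would be the most technically delicate portion of the proof.
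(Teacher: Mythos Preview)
Your overall strategy is sound and parallels the paper's for the symplectic-singularities and terminality steps, but the paper handles two points more cleanly than you propose.

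For symplectic singularities, the paper does not go through Namikawa's criterion (Theorem \ref{Nam1}). Your suggestion to inherit rational Gorensteinness from $\overline{\mathcal{W}}^{\lambda}_{\mu}$ ``using the crepancy of $m$'' is circular, since crepancy is part of what you are proving; your alternative via a Hecke-stack presentation would work but is left vague. The paper instead establishes the codimension-$4$ bound on the singular locus \emph{first}, by the elementary observation that since each $\lambda_i$ is a fundamental coweight, $\lambda_i-\alpha_j$ is never dominant, so $\overline{\Gr}^{\vec{\lambda}}$ has no codimension-$2$ strata; the fibration picture transfers this to $\overline{\mathcal{W}}^{\vec{\lambda}}_{\mu}$. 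With smoothness in codimension $3$ in hand, Flenner's extendability theorem applied to the symplectic form (extended across the $E_i$ via the factorization, exactly as you outline) yields symplectic singularities directly, and Namikawa's terminality criterion then applies. Zero discrepancy is read off from the factorization, since the minimal resolution $\widetilde{\mathcal S}\to\mathcal S$ is crepant.

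The substantive divergence is $\mathbb{Q}$-factoriality. The paper does not attempt any gluing of locally Cartier exceptional divisors. It simply notes, via the same diagram used for Proposition \ref{Prop: Smoothness} (with $\mathsf H^{\underline{\lambda}}_{\mathrm{BD}}$ replaced by its convolution analogue), that $\overline{\mathcal{W}}^{\vec{\lambda}}_{\mu}\to \Bun_B^{w_0\mu}(\mathbb{P}^1,\infty)$ is an open piece of a smooth-locally-trivial fibration with fiber $\overline{\Gr}^{\vec{\lambda}^*}$. Since convolution Grassmannians are already known to be $\mathbb{Q}$-factorial by \cite[Theorem 2.7]{kamnitzer2014yangians}, and $\mathbb{Q}$-factoriality is smooth-local, the result follows in one line. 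This completely replaces what you correctly flagged as the most delicate part of your plan; your proposed divisor-matching route is not wrong in spirit, but it is unnecessary.
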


\begin{proof}
We observe that
\begin{itemize}
    \item[(1)] $\forall i,j\in I$, $\lambda_i-\alpha_j$ is not dominant, thus every stratum ${\mathcal{W}}^{\vec{\lambda}'}_{\mu}$ in the singular locus has codimension at least $4$, i.e. $\overline{\mathcal{W}}^{\vec{\lambda}}_{\mu}$ is smooth at codimension $3$;
    \item[(2)] The pull back of the canonical Poisson structure along the resolution map $\widetilde{\mathcal S}_{\langle \lambda,\check{\alpha}_{i^*}\rangle}\to\mathcal S_{\langle \lambda,\check{\alpha}_{i^*}\rangle}$ is a symplectic structure, so by the factorization we see that the Poisson structure on the open locus $\mathring{Z}^{\alpha^*}\subset \overline{\mathcal{W}}^{\vec{\lambda}}_{\mu}$ extends over the exceptional divisors $E_i$ for all $i\in I$, moreover it's symplectic outside of a subset of codimension $2$;
    \item[(3)] $\overline{\mathcal{W}}^{\vec{\lambda}}_{\mu}\to \Bun_B^{w_0\mu}(\mathbb P^1,\infty)$ is an open substack of a locally trivial fibration with fibers isomorphic to $\bGr^{\vec{\lambda}^*}$, and the latter is known to be $\mathbb Q$-factorial \cite[Theorem 2.7]{kamnitzer2014yangians}, thus $\overline{\mathcal{W}}^{\vec{\lambda}}_{\mu}$ is $\mathbb Q$-factorial as well.
\end{itemize}
The first two facts together with Flenner's extendability theorem \cite{flenner1988extendability} shows that $\overline{\mathcal{W}}^{\vec{\lambda}}_{\mu}$ has symplectic singularities. Moreover Namikawa shows that regularity at codimension $3$ implies that it's terminal \cite{namikawa2001note}.

Finally we only need to show that $m:\overline{\mathcal{W}}^{\vec{\lambda}}_{\mu}\to \overline{\mathcal{W}}^{\lambda}_{\mu}$ has zero discrepancy. Again, this follows from the factorization. Note that the minimal resolution $\widetilde{\mathcal S}_{\langle \lambda,\check{\alpha}_{i^*}\rangle}\to\mathcal S_{\langle \lambda,\check{\alpha}_{i^*}\rangle}$ is well-known to have zero discrepancy.
\end{proof}

We also have a partial generalization of \cite[Theorem 2.9]{kamnitzer2014yangians}.
\begin{proposition}
The following are equivalent:
\begin{itemize}
\item[(1)] $\bW^{\lambda}_{\mu}$ has a symplectic resolution.
\item[(2)] $\bW^{\vec{\lambda}}_{\mu}$ is smooth.
\item[(3)] $\mW^{\vec{\lambda}}_{\mu}=\bW^{\vec{\lambda}}_{\mu}$.
\item[(4)] If there are dominant coweights $\nu_i,i=1,\cdots, n$, such that $\nu_i\le \lambda_i$ and $\mu\le \sum_{i=1}^n \nu_i$, then we must have $\nu_i=\lambda_i$.
\end{itemize}
\end{proposition}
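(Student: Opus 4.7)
The plan is to establish the equivalences in the order $(2)\Leftrightarrow(3)\Leftrightarrow(4)$ first, then $(2)\Rightarrow(1)$, and finally the harder direction $(1)\Rightarrow(2)$.

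For $(2)\Leftrightarrow(3)$, this is immediate from Proposition \ref{Prop: Smooth Locus 2}: since $\bW^{\vec{\lambda}}_{\mu}$ is normal with smooth locus equal to $\mW^{\vec{\lambda}}_{\mu}$, it is smooth precisely when the two coincide. For $(3)\Leftrightarrow(4)$, the key ingredient is the stratification $\bW^{\vec{\lambda}}_{\mu}=\coprod_{\vec{\nu}} \mW^{\vec{\nu}}_{\mu}$ indexed by tuples $\vec{\nu}=(\nu_1,\ldots,\nu_n)$ of dominant coweights with $\nu_i\le\lambda_i$ and $\mu\le\sum_i\nu_i$, inherited from the convolution diagram $\overline{\mathrm{Gr}}^{\vec{\lambda}}=\overline{\mathrm{Gr}}^{\lambda_1}\widetilde{\times}\cdots\widetilde{\times}\overline{\mathrm{Gr}}^{\lambda_n}$ by pulling back the monopole bubbling decomposition of $\bW^{\lambda}_{\mu}$. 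Each such stratum is nonempty by the same argument as in the single-factor case, so the only stratum is $\vec{\nu}=\vec{\lambda}$ precisely under condition (4).

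For $(2)\Rightarrow(1)$: by Theorem \ref{Thm: Terminalization}, the convolution map $m:\bW^{\vec{\lambda}}_{\mu}\to\bW^{\lambda}_{\mu}$ is a projective, birational, crepant morphism from a $\mathbb{Q}$-factorial terminal symplectic variety. If $\bW^{\vec{\lambda}}_{\mu}$ happens to be smooth, then $m$ is automatically a symplectic resolution, since a crepant projective birational map from a smooth symplectic variety to one with symplectic singularities is symplectic.

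The hard direction is $(1)\Rightarrow(2)$. Here I would invoke the theorem of Namikawa (and Kaledin-Verbitsky) that for a symplectic variety admitting a symplectic resolution, every $\mathbb{Q}$-factorial terminalization is smooth (i.e.\ is itself a symplectic resolution): any two $\mathbb{Q}$-factorial terminalizations are connected by a sequence of Mukai flops over the base, and smoothness is preserved under such flops. Applied to the $\mathbb{Q}$-factorial terminalization $m:\bW^{\vec{\lambda}}_{\mu}\to\bW^{\lambda}_{\mu}$ from Theorem \ref{Thm: Terminalization}, this forces $\bW^{\vec{\lambda}}_{\mu}$ to be smooth whenever a symplectic resolution of $\bW^{\lambda}_{\mu}$ exists. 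The main subtlety is ensuring that the existence of a symplectic resolution over the singular locus is enough to trigger Namikawa's theorem in this setting; this is where citing \cite{namikawa2001note} (or the follow-up papers on Mukai flops) and noting that $\overline{\mathcal{W}}^\lambda_\mu$ is a symplectic singularity by Theorem \ref{Thm: Symplectic Singularities} is essential.
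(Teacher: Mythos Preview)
Your treatment of $(2)\Leftrightarrow(3)$, $(3)\Leftrightarrow(4)$, and $(2)\Rightarrow(1)$ coincides with the paper's.

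For $(1)\Rightarrow(2)$ your route differs from the paper's, and the paper in fact explains why your route is problematic. The result from \cite{kamnitzer2014yangians} that you are implicitly leaning on invokes \cite[5.6]{namikawa2011poisson}, which assumes a \emph{conical} $\mathbb G_m$-action on the base; the paper points out (with a footnote crediting Krylov) that for non-dominant $\mu$ the natural $\mathbb G_m$-action on $\bW^{\lambda}_{\mu}$ is not conical. Your appeal to ``Namikawa/Kaledin--Verbitsky: any two $\mathbb Q$-factorial terminalizations are connected by Mukai flops, and smoothness is preserved'' is in the same family of results and carries the same hypothesis issue; you would need to supply a precise reference valid for an affine symplectic singularity without a contracting $\mathbb G_m$, and your closing sentence does not identify this as the obstruction.

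The paper's workaround is genuinely different and uses the main theorem of the paper. Arguing by contrapositive: if $(4)$ fails, choose $\vec\nu\neq\vec\lambda$ as in $(4)$ and set $\nu=\sum_i\nu_i$, which is dominant. The dominant case of \cite[Theorem 2.9]{kamnitzer2014yangians} (where the conical action \emph{is} available) shows $\bW^{\lambda}_{\nu}$ admits no symplectic resolution. Now suppose $(1)$ held. Restricting a symplectic resolution of $\bW^{\lambda}_{\mu}$ to the open set furnished by Theorem \ref{Thm: Transversal Slice} gives a crepant resolution of $\bW^{\lambda}_{\nu}\times\mW^{\nu}_{\mu}$; a short generic-fibre lemma (proved immediately after the proposition) then extracts from this a crepant, hence symplectic, resolution of $\bW^{\lambda}_{\nu}$ alone---contradiction. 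So the paper trades the missing conical hypothesis for the transversal-slice description, which is exactly what the rest of the paper was built to provide.
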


\begin{proof}
$(2)\Longrightarrow (1)$ is tautological. $(2)\Longleftrightarrow (3)$: $\bW^{\vec{\lambda}}_{\mu}$ is smooth if and only if it agrees with its smooth locus, which is equivalent to $\mW^{\vec{\lambda}}_{\mu}=\bW^{\vec{\lambda}}_{\mu}$ by Proposition \ref{Prop: Smooth Locus 2}.

$(3)\Longleftrightarrow (4)$: Note that $$\bW^{\vec{\lambda}}_{\mu}= \mW^{\vec{\lambda}}_{\mu}\cup \coprod_{\vec{\nu}}\mW^{\vec{\nu}}_{\mu}$$where $\vec{\nu}$ satisfy condition $(3)$, then the equivalence is obvious.

$(1)\Longrightarrow (2)$: For dominant $\mu$, this is already proven in \cite[Theorem 2.9]{kamnitzer2014yangians}. For general $\mu$, there is a subtle complication: \textit{loc. cit.} uses \cite[5.6]{namikawa2011poisson} where a conical $\mathbb G_m$ action is assumed, but the natural $\mathbb G_m$ action on $\bW^{\lambda}_{\mu}$ is not conical \footnote{The author thanks Vasily Krylov for pointing out this subtlety.}.

Assume that $(2)$ is false, then by what we have shown, $(4)$ is false, i.e. there exists dominant coweights $\nu_i\le \lambda_i$ such that $\mu\le \sum_i \nu_i$ and there is some $i$ that $\nu_i\neq \lambda_i$. Define $\nu= \sum_i \nu_i$, then the implication $(1)\Longrightarrow (2)$ for dominant $\nu$ implies that $\overline{\mathcal W}^{\lambda}_{\nu}$ has no symplectic resolution. We shall use this to show that in this case, (1) must be false.

Note that for symplectic singularity $Y$, a symplectic resolution $f:X\to Y$ is the same as a crepant resolution of $Y$, since nondegeneracy of $f^*\omega_Y$ is equivalent to $f^*\omega_Y^n$ having no zero along exceptional divisors, which in turn is equivalent to $f$ being crepant.

Suppose that (1) holds, then the restriction of a symplectic resolution to an open locus gives rise to a crepant resolution of $\overline{\mathcal W}^{\lambda}_{\nu}\times \mathcal W^{\nu}_{\mu}$, by Theorem \ref{Thm: Transversal Slice}. Then we apply the following lemma to the crepant resolution of $\overline{\mathcal W}^{\lambda}_{\nu}\times \mathcal W^{\nu}_{\mu}$, we get a crepant resolution of $\overline{\mathcal W}^{\lambda}_{\nu}$, which is automatically symplectic by previous discussion. However this contradicts with what we have already deduced, so we conclude that $(1)$ must be false.
\end{proof}

\begin{lemma}
Suppose that $X$ is a normal variety, and $Y$ is a smooth variety, and $f: Z\to X\times Y$ is a crepant resolution. Then there exists a Zariski open subset $U$ in $Y$ such that $\forall y\in U$, the fiber $f_y: Z_y\to X\times {y}$ is a crepant resolution.
\end{lemma}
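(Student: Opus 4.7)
My plan is to split the claim into three open conditions on $y \in Y$, each of which holds generically, and to take $U$ to be the intersection of the resulting non-empty opens. Concretely, the condition that $f_y : Z_y \to X \times \{y\}$ be a crepant resolution decomposes as (a) $Z_y$ is smooth, (b) $f_y$ is proper and birational, and (c) $f_y^*\omega_{X \times \{y\}} \cong \omega_{Z_y}$. Properness of $f_y$ is automatic from properness of $f$, so there are really three things to check.

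For (a), I would consider the composition $\pi := \pi_Y \circ f : Z \to Y$. Since we work in characteristic zero and $Z$ is smooth, generic smoothness applied to $\pi$ produces an open $U_1 \subset Y$ over which $\pi$ is smooth; in particular $Z_y$ is smooth for $y \in U_1$. For the birationality half of (b), let $V \subset X \times Y$ be the maximal open locus on which $f$ is an isomorphism; it is dense since $f$ is birational. Its closed complement has dimension strictly less than $\dim(X \times Y)$, so by upper semicontinuity of fibre dimension along the smooth projection $\pi_Y$, there is an open $U_2 \subset Y$ on which $V \cap (X \times \{y\})$ is dense in $X \times \{y\}$; consequently $f_y$ is an isomorphism on a dense open of $X \times \{y\}$. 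To upgrade this to birationality I also need $Z_y$ to be irreducible, and this holds on a third open $U_3$: since $X$ is geometrically irreducible, $X_{k(Y)}$ is irreducible, so the generic fibre $Z_{\eta_Y}$, being birational to $X_{k(Y)}$, is irreducible, and geometric irreducibility of fibres is a generic property for a proper morphism.

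For (c), I would work over $U_1$, where $\pi$ is smooth. Adjunction then gives $\omega_{Z_y} \cong (\omega_Z \otimes \pi^*\omega_Y^{-1})|_{Z_y}$, and likewise $\omega_{X \times \{y\}} \cong \omega_X$ via the canonical isomorphism $X \times \{y\} \xrightarrow{\sim} X$. Combining the crepancy hypothesis $\omega_Z \cong f^*\omega_{X \times Y}$ with the product formula $\omega_{X \times Y} \cong \pi_X^*\omega_X \otimes \pi_Y^*\omega_Y$ and cancelling the $\pi_Y^*\omega_Y$ factor yields $\omega_{Z/Y} \cong f^*\pi_X^*\omega_X$, which restricts on $Z_y$ to the desired $\omega_{Z_y} \cong f_y^*\omega_{X \times \{y\}}$. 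Setting $U := U_1 \cap U_2 \cap U_3$ then satisfies the claim. The main delicate point is that crepancy even makes sense only once $\omega_{X \times Y}$ is a genuine line bundle; this is automatic in the intended application, where $X = \overline{\mathcal W}^{\lambda}_{\nu} \times \mathcal W^{\nu}_{\mu}$ has symplectic, hence Gorenstein, singularities.
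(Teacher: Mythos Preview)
Your proposal is correct and follows essentially the same strategy as the paper: generic smoothness for the fibres $Z_y$, a genericity argument for birationality of $f_y$, and restricting the crepancy identity to fibres. The only cosmetic difference is that the paper shrinks $U$ so that $K_U$ is trivial before restricting the divisor equation, whereas you carry the $\pi^*\omega_Y$ factor through via adjunction and cancel it; these are equivalent manoeuvres.
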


\begin{proof}
By generic smoothness theorem, there exists Zariski open subset $U$ in $Y$ such that the composition of $f$ and projection to $Y$ is smooth. Shrink $U$ if needed, we can assume that $\forall y\in U$, $f_y$ is birational and is isomorphism on the smooth locus of $X$, thus $f_y: Z_y\to X\times {y}$ is a resolution of singularity. Shrink $U$ again if needed, we assume that the canonical divisor $K_U$ is trivial, so the Weil divisor $[K_{X\times U}]$ can be written as $[K_X]\times U$. Now by the assumption that $f$ is crepant, we have $f^*([K_X]\times U)=[K_{Z_U}]$. Restrict the equation to fiber, and we see that $f_y^*([K_X])=[K_{Z_y}]$, $\forall y\in U$. This means that $f_y$ is a crepant resolution.
\end{proof}

\begin{example}
In Type $A$, fundamental coweights are minuscule, so condition (4) is automatically satisfied, thus $m:\bW^{\vec{\lambda}}_{\mu}\to \bW^{\lambda}_{\mu}$ is a symplectic resolution.
\end{example}

\bibliographystyle{alpha}
\bibliography{Bib}

\newcommand{\etalchar}[1]{$^{#1}$}
\begin{thebibliography}{KWWY14}

\bibitem[BDG15]{bullimore2015coulomb}
Mathew Bullimore, Tudor Dimofte, and Davide Gaiotto.
\newblock The {Coulomb Branch} of {$3d$ $\mathcal N=4$} theories.
\newblock {\em arXiv preprint arXiv:1503.04817}, 2015.

\bibitem[BFN16a]{braverman2016coulomb}
Alexander Braverman, Michael Finkelberg, and Hiraku Nakajima.
\newblock Coulomb branches of {$3d$ $\mathcal N=4$} quiver gauge theories and
  slices in the affine {Grassmannian}.
\newblock {\em arXiv preprint arXiv:1604.03625}, 2016.

\bibitem[BFN16b]{braverman2016towards}
Alexander Braverman, Michael Finkelberg, and Hiraku Nakajima.
\newblock {Towards a mathematical definition of Coulomb branches of $3
  $-dimensional $\mathcal N= 4$ gauge theories, II}.
\newblock {\em arXiv preprint arXiv:1601.03586}, 2016.

\bibitem[BFN18]{braverman2018line}
Alexander Braverman, Michael Finkelberg, and Hiraku Nakajima.
\newblock Line bundles over {Coulomb} branches.
\newblock {\em arXiv preprint arXiv:1805.11826}, 2018.

\bibitem[BLPW14]{braden2014quantizations}
Tom Braden, Anthony Licata, Nicholas Proudfoot, and Ben Webster.
\newblock {Quantizations of conical symplectic resolutions II: category
  $\mathcal O $ and symplectic duality}.
\newblock {\em arXiv preprint arXiv:1407.0964}, 2014.

\bibitem[BPW12]{braden2012quantizations}
Tom Braden, Nicholas Proudfoot, and Ben Webster.
\newblock {Quantizations of conical symplectic resolutions I: local and global
  structure}.
\newblock {\em arXiv preprint arXiv:1208.3863}, 2012.

\bibitem[CG09]{chriss2009representation}
Neil Chriss and Victor Ginzburg.
\newblock {\em Representation theory and complex geometry}.
\newblock Springer Science \& Business Media, 2009.

\bibitem[CK98]{cherkis1998singular}
Sergey~A Cherkis and Anton Kapustin.
\newblock Singular monopoles and supersymmetric gauge theories in three
  dimensions.
\newblock {\em Nuclear Physics B}, 525(1-2):215--234, 1998.

\bibitem[dBHO{\etalchar{+}}97]{de1997mirror}
Jan de~Boer, Kentaro Hori, Hirosi Ooguri, Yaron Oz, and Zheng Yin.
\newblock {Mirror symmetry in three-dimensional gauge theories, $\mathrm{SL}(2,
  \mathbb Z)$ and D-brane moduli spaces}.
\newblock {\em Nuclear Physics B}, 493(1-2):148--176, 1997.

\bibitem[dBHOO97]{de1997mirror2}
Jan de~Boer, Kentaro Hori, Hirosi Ooguri, and Yaron Oz.
\newblock {Mirror symmetry in three-dimensional gauge theories, quivers and
  D-branes}.
\newblock {\em Nuclear Physics B}, 493(1-2):101--147, 1997.

\bibitem[Fle88]{flenner1988extendability}
Hubert Flenner.
\newblock Extendability of differential forms on non-isolated singularities.
\newblock {\em Inventiones mathematicae}, 94(2):317--326, 1988.

\bibitem[GOP12]{gomis2012exact}
Jaume Gomis, Takuya Okuda, and Vasily Pestun.
\newblock Exact results for {’t Hooft} loops in gauge theories on {$S^4$}.
\newblock {\em Journal of High Energy Physics}, 2012(5):141, 2012.

\bibitem[HW97]{hanany1997type}
Amihay Hanany and Edward Witten.
\newblock {Type IIB superstrings, BPS monopoles, and three-dimensional gauge
  dynamics}.
\newblock {\em Nuclear Physics B}, 492(1-2):152--190, 1997.

\bibitem[IOT12]{ito2012line}
Yuto Ito, Takuya Okuda, and Masato Taki.
\newblock {Line operators on $S^1\times \mathbb R^3$ and quantization of the
  Hitchin moduli space}.
\newblock {\em Journal of High Energy Physics}, 2012(4):10, 2012.

\bibitem[KTW{\etalchar{+}}18]{kamnitzer2018category}
Joel Kamnitzer, Peter Tingley, Ben Webster, Alex Weekes, and Oded Yacobi.
\newblock {On category $\mathcal O$ for affine Grassmannian slices and
  categorified tensor products}.
\newblock {\em arXiv preprint arXiv:1806.07519}, 2018.

\bibitem[KWWY14]{kamnitzer2014yangians}
Joel Kamnitzer, Ben Webster, Alex Weekes, and Oded Yacobi.
\newblock Yangians and quantizations of slices in the affine {Grassmannian}.
\newblock {\em Algebra \& Number Theory}, 8(4):857--893, 2014.

\bibitem[MW19]{muthiah2019symplectic}
Dinakar Muthiah and Alex Weekes.
\newblock Symplectic leaves for generalized affine {Grassmannian} slices.
\newblock {\em arXiv preprint arXiv:1902.09771}, 2019.

\bibitem[Nak15]{nakajima2015questions}
Hiraku Nakajima.
\newblock {Questions on provisional Coulomb branches of $3 $-dimensional
  $\mathcal N=4$ gauge theories}.
\newblock {\em arXiv preprint arXiv:1510.03908}, 2015.

\bibitem[Nam00]{namikawa2000extension}
Yoshinori Namikawa.
\newblock Extension of 2-forms and symplectic varieties.
\newblock {\em arXiv preprint math/0010114}, 2000.

\bibitem[Nam01]{namikawa2001note}
Yoshinori Namikawa.
\newblock A note on symplectic singularities.
\newblock {\em arXiv preprint math/0101028}, 2001.

\bibitem[Nam11]{namikawa2011poisson}
Yoshinori Namikawa.
\newblock Poisson deformations of affine symplectic varieties.
\newblock {\em Duke Mathematical Journal}, 156(1):51--85, 2011.

\bibitem[NW19]{nakajima2019coulomb}
Hiraku Nakajima and Alex Weekes.
\newblock Coulomb branches of quiver gauge theories with symmetrizers.
\newblock {\em arXiv preprint arXiv:1907.06552}, 2019.

\bibitem[{Sta}20]{stacks-project}
The {Stacks project authors}.
\newblock The stacks project.
\newblock \url{https://stacks.math.columbia.edu}, 2020.

\bibitem[Ton99]{tong1999three}
David Tong.
\newblock {Three-dimensional gauge theories and ADE monopoles}.
\newblock {\em Physics Letters B}, 448(1-2):33--36, 1999.

\bibitem[Web16]{webster2016koszul}
Ben Webster.
\newblock {Koszul duality between Higgs and Coulomb categories $\mathcal O$}.
\newblock {\em arXiv preprint arXiv:1611.06541}, 2016.

\bibitem[Wee20]{weekes2020quiver}
Alex Weekes.
\newblock Quiver gauge theories and symplectic singularities.
\newblock {\em arXiv preprint arXiv:2005.01702}, 2020.

\end{thebibliography}

\end{document}